\documentclass[11pt]{amsart}

\usepackage{amssymb, amsthm, amsmath, gensymb, dsfont}
\usepackage{graphicx, comment}
\usepackage[small]{caption}
\usepackage{subcaption}
\usepackage{epsfig}
\usepackage{tikz, pgfplots, float}

\usepackage{amsfonts}

\usepackage[utf8]{inputenc}
\usepackage{fullpage}
\usepackage{framed}

\usepackage{enumerate}
\usepackage{url}
\usepackage[breaklinks]{hyperref}
\usepackage{cleveref}
\hypersetup{
	colorlinks = true, 
	urlcolor = cyan, 
	linkcolor = teal, 
	citecolor = cyan 
}

\newtheorem{theorem}{Theorem}
\newtheorem{definition}[theorem]{Definition}
\newtheorem{lemma}[theorem]{Lemma}
\newtheorem{claim}[theorem]{Claim}
\crefname{claim}{claim}{claims}

\newcommand{\eps}{\varepsilon}

\newcommand{\NN}{\mathbb{N}} 
\newcommand{\RR}{\mathbb{R}} 

\usepackage{tikz}
\tikzstyle{vtx} = [circle, fill, inner sep=0.7]

\title{On the number of regular simplices in odd dimensions}

\author{Felix Christian Clemen}
\address{Felix Christian Clemen \newline University of Victoria, Canada}
\email{fclemen@uvic.ca}

\author{Dingyuan Liu}
\address{Dingyuan Liu \newline Karlsruhe Institute of Technology, Germany}
\email{liu@mathe.berlin}

\date{}

\begin{document}

\begin{abstract}
Let $S^k_d(n)$ denote the maximum number of regular $(k-1)$-simplices spanned by $n$ points in $\mathbb{R}^d$. For all fixed $r\geq k\geq3$, the exact value of $S^k_{2r}(n)$ was recently determined by Dumitrescu and the authors for all sufficiently large $n$ when $k=3$, and conditionally on an optimization problem when $k\geq4$. In this paper we establish a sharp estimate in odd dimensions by proving that
\[S^k_{2r+1}(n)=\binom{r}{k}\left(\frac{n}{r}\right)^k+\Theta(n^{k-2/3})\]
for all fixed $r\geq k\geq3$. This can be viewed as a generalization of the result by Erd\H{o}s and Pach on unit distances in odd dimensions. We also establish a structural characterization of nearly extremal point sets. A notable difference from the even-dimensional case is that nearly extremal point sets in odd dimensions admit two distinct configurations. The proof leverages techniques from hypergraph Tur\'an theory and linear algebra, with additional geometric arguments.
\end{abstract}

\maketitle

\section{Introduction}
\label{sec:intro}
Combinatorial geometry studies extremal problems concerning geometric objects. The origins of the field can be traced back to the celebrated unit-distance problem of Erd\H{o}s~\cite{Er46,Er59}, which asks for the maximum number of pairs of points at unit distance among $n$ points in $\RR^d$. See~\cite{Er67,EP90} for an asymptotic solution in the case $d\geq4$, and see~\cite{NA26,AI26,WS26} for a recent breakthrough in dimension $2$. Shortly after introducing the unit-distance problem, Erd\H{o}s and Purdy~\cite{Er75,EP71,EP75,EP76} considered a natural generalization, that is, to maximize the number of regular $(k-1)$-simplices spanned by $n$ points in $\RR^d$. Here, a regular $(k-1)$-simplex is a set of $k$ points that are pairwise equidistant.

Define the extremal function $S^k_d(n)$ as the maximum number of regular $(k-1)$-simplices spanned by $n$ points in $\RR^d$. Erd\H{o}s~\cite{Er94} specifically conjectured that $S^3_6(n)=n^3/27-o(n^3)$. This conjecture was recently confirmed by Dumitrescu and the authors as part of the following more general result.

\begin{theorem}[Clemen--Dumitrescu--Liu~\cite{CDL26}]
\label{simplex}
For any fixed integers $r\geq k\geq3$,
\[S^k_{2r}(n)=\binom{r}{k}\left(\frac{n}{r}\right)^k+\Theta(n^{k-1})\]
and
\[\binom{r}{k}\left(\frac{n}{r}\right)^k+\Omega(n^{k-2/3})\leq S^k_{2r+1}(n)\leq\binom{r}{k}\left(\frac{n}{r}\right)^k+o(n^k).\]
\end{theorem}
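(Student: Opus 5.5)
We would prove the four bounds of the theorem separately: the two lower bounds by explicit constructions, the two upper bounds by combining hypergraph Turán-type stability with linear-algebraic constraints on equidistant configurations.

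\textbf{Lower bound in dimension $2r$.} Write $\RR^{2r}$ as an orthogonal sum of $r$ planes $\RR^2\oplus\cdots\oplus\RR^2$. In the $i$-th plane place $n/r$ points on the circle of radius $1/\sqrt2$ centred at the origin. Any two points taken from different planes are then at distance exactly $1$. Choosing $k$ distinct planes and one point from each gives a regular $(k-1)$-simplex, which yields $\binom{r}{k}(n/r)^k$ simplices.

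To gain the extra $\Theta(n^{k-1})$ term we would add simplices that use two points from the same circle. Two points on the same circle at distance $1$ (a chord of length $1$ in a circle of radius $1/\sqrt2$, i.e. a quarter-turn) form an edge. Arranging each circle's points as a union of orbits under the rotation by $90^\circ$ produces $\Omega(n)$ such unit pairs per circle. Combining each such pair with one point from each of $k-2$ other planes gives $\Omega(n\cdot n^{k-2})=\Omega(n^{k-1})$ further simplices.

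\textbf{Lower bound in dimension $2r+1$.} Here we would use $r-1$ circles together with one $2$-sphere in the remaining $\RR^3$ factor. By the Erd\H{o}s unit-distance construction, a $2$-sphere can carry $n^{4/3}$ unit pairs. Substituting this into the count of simplices that use two points of the sphere and one point from each of $k-2$ other factors gives the $\Omega(n^{k-2/3})$ term.

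\textbf{Upper bounds.} Let $H$ be the $k$-graph of regular simplices, and consider the unit-distance graph $G$ of the relevant edge lengths; by scaling it is enough to handle each distance class. The key geometric input is that $\RR^d$ does not contain a large "blown-up" equidistant configuration $K_{t,\dots,t}$ with $r+1$ parts in dimension $2r$, since $r+1$ mutually orthogonal circles need $2r+2$ dimensions. More precisely, the relevant forbidden configuration is $K^{(k)}$ with $r+1$ parts of size $t$ under the linear-algebra constraint: mutually equidistant classes force the classes into pairwise orthogonal affine subspaces, each of dimension at least $2$ if it contains $3$ points on a common sphere.

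Given this forbidden structure, a hypergraph Turán/supersaturation argument applies. The Erd\H{o}s–Stone-type theorem for the complete $(r+1)$-partite pattern bounds the number of $k$-cliques by $\binom{r}{k}(n/r)^k+o(n^k)$. The stability version then forces the configuration to be close to the $r$-part structure. In dimension $2r$ we would refine this to an $O(n^{k-1})$ error by cleaning: moving to near-extremal parts, which the linear algebra forces to lie in orthogonal planes, and showing that each vertex participates in few non-transversal simplices.

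\textbf{Main obstacle.} The hardest step is the exact $O(n^{k-1})$ error term in even dimensions. This needs a rigidity argument showing that almost all parts must be circles in orthogonal planes, and that each point outside this structure loses $\Omega(n^{k-1})$ simplices. We would attempt this by a vertex-deletion/symmetrization argument combined with a dimension count on the spans of the parts.
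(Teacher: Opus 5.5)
The genuine gap is the combinatorial step of your upper bound, and it fails under either reading of what you wrote.

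\emph{Single-distance reading.} If you mean the single-distance graph $G$, it is indeed $K_{r+1}(3)$-free. A generalized Tur\'an theorem (Zykov plus removal) then bounds its $k$-cliques by $\binom{r}{k}(n/r)^k+o(n^k)$. But ``by scaling it is enough to handle each distance class'' is not a valid reduction. One point set can span regular simplices of many side lengths, and per-length bounds do not add up to a bound on the total. The loss is real combinatorially: take a complete $3$-partite graph in colour $1$, put a complete $3$-partite graph in colour $2$ inside each part, and iterate. Every colour class is $K_4$-free, yet there are about $\frac{n^3}{27}\sum_{j\geq0}9^{-j}=\frac{n^3}{24}>\frac{n^3}{27}$ monochromatic triangles.

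\emph{Hypergraph reading.} If instead you mean the $k$-graph of all regular simplices with $K^{(k)}_{r+1}(t)$ forbidden, there is no Erd\H{o}s--Stone theorem with the value you state. Blow-up invariance only gives $\mathrm{ex}(n,K^{(k)}_{r+1})+o(n^k)$, and hypergraph cliques have larger Tur\'an density than the $r$-partite construction. For $k=r=3$, take Tur\'an's construction: classes $V_0,V_1,V_2$, all transversal triples, and all triples with two vertices in $V_i$ and one in $V_{i+1}$ (indices mod $3$). It is $K^{(3)}_4$-free with about $5n^3/54>n^3/27$ edges and is far from $3$-partite. So neither your bound nor your stability step follows.

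The missing idea is to forbid, in the $k$-graph of regular simplices of all side lengths, the blow-up of the \emph{expanded clique} $H^{(k)}_{r+1}$; both examples above contain $H^{(3)}_4$. Mubayi's theorem (with stability; Pikhurko for the exact version) gives it Tur\'an density exactly $k!\binom{r}{k}/r^k$. A supersaturation/removal step then passes to blow-ups, which is \Cref{new:stability}. Your orthogonality observation is precisely what is needed for \Cref{Hfree}. Each pair of the $r+1$ core classes lies in a common blown-up edge, and \Cref{witzig} applies to that edge whatever its side length. So the core classes have pairwise orthogonal affine hulls of dimension at least $2$, which would need $2r+2$ dimensions.

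Your lower-bound constructions are fine, with one correction. In odd dimensions the $2$-sphere must have the same radius $1/\sqrt2$ as the circles, so what you need are many \emph{orthogonal} pairs on it. $\Omega(m^{4/3})$ of these comes from the Erd\H{o}s--Hickerson--Pach construction (dual to a Szemer\'edi--Trotter-tight point--line configuration), not from the planar Erd\H{o}s lattice.

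Finally, your even-dimensional $O(n^{k-1})$ upper bound is only a plan. Given stability, it runs as in \Cref{sec:number}:
\begin{enumerate}
\item The dimension count puts each part on a circle.
\item \Cref{spheres_distance_center}(ii) gives the circles a common centre and radius $\gamma$.
\item An exchange argument removes every point off the circles. Such a point is at distance $\sqrt2\gamma$ from more than two points of at most $r-2$ circles. It therefore lies in at most $\binom{r-2}{k-1}(n/r)^{k-1}+O(\eps n^{k-1})$ simplices, fewer than a new point placed on a circle.
\item Non-transversal simplices number $O(n^{k-1})$, since a fixed distance occurs at most $m$ times among $m$ concyclic points.
\end{enumerate}
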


In fact, the exact value of $S^k_{2r}(n)$ was determined in~\cite{CDL26} for sufficiently large $n$, conditional on the solution of a certain optimization problem. The key ingredient was a stability theorem showing that every set of $n$ points in $\RR^{2r}$ spanning nearly the maximum number of regular $(k-1)$-simplices must exhibit a highly structured configuration. However, the argument in~\cite{CDL26} relied crucially on the restriction to even dimensions, leaving the order of magnitude of the lower-order term in $S^k_{2r+1}(n)$ undetermined. The main contribution of this paper is to close this gap by establishing the following sharper estimate for $S^k_{2r+1}(n)$.

\begin{theorem}
\label{simplex_odd}
Let $r\geq k\geq3$ be fixed integers. Then \[S^k_{2r+1}(n)=\binom{r}{k}\left(\frac{n}{r}\right)^k+\Theta(n^{k-2/3}).\]
\end{theorem}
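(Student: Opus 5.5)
The lower bound $\Omega(n^{k-2/3})$ is already part of Theorem~\ref{simplex}, so the plan addresses only the upper bound
\[S^k_{2r+1}(n)\le\binom{r}{k}(n/r)^k+O(n^{k-2/3}).\]
The approach is to combine a stability argument with the Erd\H{o}s--Pach type analysis of unit distances in odd dimensions. That analysis yields, for $n$ points on a $2$-sphere or more generally in a $3$-dimensional block, at most $O(n^{4/3})$ extra unit distances.

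\textbf{Step 1 (stability).} Let $P\subset\RR^{2r+1}$ with $|P|=n$ span at least $\binom{r}{k}(n/r)^k$ regular simplices. Build the $k$-uniform hypergraph $H$ of regular simplices, and for each side length the corresponding distance graph. A $K_{r+1}$-free-type Tur\'an argument, as in \cite{CDL26}, bounds the number of large "orthogonal" cliques. From this I expect to conclude that, after deleting $o(n)$ points, $P$ splits into $r$ parts $V_1,\dots,V_r$ with the following properties:
\begin{itemize}
\item each part has size $(1+o(1))n/r$;
\item almost all $k$-tuples with one point in each of $k$ distinct parts are regular simplices.
\end{itemize}
Using linear algebra on the Gram matrix, one then shows that the parts lie on circles or $2$-spheres centered at a common point, in pairwise orthogonal linear subspaces $U_1,\dots,U_r$, with radii equal up to a common scaling.

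\textbf{Step 2 (dimension count).} Since $\sum\dim U_i\le 2r+1$ and each circle needs dimension $2$, at most one part can live in a $3$-dimensional subspace, namely on a $2$-sphere. This is the odd-dimensional feature, and it gives the two configurations mentioned in the abstract: either all parts lie on circles with one spare dimension, or exactly one part lies on a $2$-sphere.

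\textbf{Step 3 (counting).} In the circle case, distances between points of distinct parts are all essentially equal, and within a part at most $O(n)$ pairs have the given distance. So the non-transversal simplices contribute only $O(n^{k-1})$, and the bound follows by optimizing part sizes. In the $2$-sphere case, simplices using two points of the sphere part $V_1$ require those two points to be at the prescribed distance on a $2$-sphere. By the Erd\H{o}s--Pach/Spencer--Szemer\'edi--Trotter bound this happens for $O(|V_1|^{4/3})$ pairs, contributing $O(n^{4/3}\cdot n^{k-2})=O(n^{k-2/3})$. The contributions from other parts are again $O(n^{k-1})$.

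\textbf{Step 4 (cleanup).} Handle the deleted $o(n)$ points by a standard vertex-degree argument. In a near-extremal set every vertex lies in at least roughly $\binom{r-1}{k-1}(n/r)^{k-1}$ simplices. Any point not fitting the structure has degree noticeably smaller, by a constant factor loss. Iteratively deleting low-degree points therefore reduces to a clean configuration, and a clean configuration cannot beat the bound.

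\textbf{Main obstacle.} I expect the hardest step to be Step 1 combined with Step 4, specifically proving exact structure with a quantitative error good enough to reach $n^{k-2/3}$ rather than $o(n^k)$. I would first try to establish an error of the form $\binom{r}{k}(n/r)^k-cn^{k-1}$ for any point set deviating from the structure, via the degree-deletion argument, reusing the $K$-free hypergraph stability from \cite{CDL26}.
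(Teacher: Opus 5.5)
\textbf{The gap is in Step 4.} This is exactly where the improvement from $o(n^k)$ to $O(n^{k-2/3})$ has to happen. Your assertion that every point off the stability structure loses a constant fraction of its degree is false.

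Here is a counterexample. Let $C_1,\dots,C_{r-1}$ be circles and $C_r$ a $2$-sphere, all centered at $0$ with radius $\gamma$ and lying in pairwise orthogonal subspaces. Suppose $X\cap C_r$ lies on a great circle $E\subseteq C_r$ with normal line $L$.
\begin{enumerate}[(a)]
\item Every $v$ with $\|v\|=\gamma$ in $\mathrm{Span}(\mathrm{Aff}(C_1)\cup L)$ is orthogonal to $C_2,\dots,C_{r-1}$ and to $E$.
\item Hence $v$ is at distance $\sqrt{2}\gamma$ from all of them, and its degree agrees with that of a point of $C_1$ up to $O(\eps n^{k-1})$.
\item Yet $v\notin C_1\cup\dots\cup C_r$.
\end{enumerate}
Such points can sit simultaneously on the $2$-sphere through $C_1$ and $L$ and on the $2$-sphere through $C_2$ and $L$. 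These two spheres share $L$ and are not orthogonal. So neither deleting nor replacing low-degree points removes these points, and what remains is not a configuration of your Step~1 type.

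\textbf{The missing idea is to absorb these points rather than delete them.} The paper fixes an extremal $X$ and compares each $v\in A_0$ with a new point of $C_r\setminus X$, whose degree is at least $\binom{r-1}{k-1}(n/r)^{k-1}-O(\eps n^{k-1})$. This comparison forces three things:
\begin{enumerate}[(i)]
\item $\|v\|=\gamma$;
\item the projection of $v$ is the center for exactly $r-2$ of $C_1,\dots,C_{r-1}$, and not for $C_r$;
\item $v$ is at distance $\sqrt2\gamma$ from at least $2/3$ of $X\cap C_r$.
\end{enumerate}
Condition (iii) places $2/3$ of $X\cap C_r$ on a single great circle. Two distinct great circles share at most two points, so this circle, and hence its axis $L$, is the same for every $v\in A_0$. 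Therefore $X\subseteq\Gamma_1\cup\dots\cup\Gamma_r$, where $\Gamma_r=C_r$ and, for $i<r$, $\Gamma_i$ is the radius-$\gamma$ sphere in $\mathrm{Span}(\mathrm{Aff}(C_i)\cup L)$. Your Step~3 count then applies to all $n$ points with no deletion: Maclaurin bounds the transversal simplices, and the $O(n^{4/3})$ spherical distance bound on each $\Gamma_i$ handles the rest.

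\textbf{A separate error in Step 1.} Your claim of a common center is wrong for $r=3$. In $\RR^7$, three pairwise orthogonal circles with distinct centers staggered along the spare axis, with equal inter-circle distances, are also near-extremal. This, not your ``all circles with one spare dimension'' case, is the genuine second configuration; the common-center all-circles case is simply absorbed into the $2$-sphere case.

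The skewed configuration needs its own argument:
\begin{enumerate}[(a)]
\item An off-structure point is at the common distance from at least $3$ points of at most one circle.
\item So its degree is $O(\eps n^2)$, and it can be replaced.
\item This leaves $X\subseteq C_1\cup C_2\cup C_3$, which spans at most $n^3/27+n^2$ triangles.
\end{enumerate}
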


In the case $k=2$, Erd\H{o}s and Pach~\cite{EP90} proved that, for every $r\geq2$, the maximum number of pairs at unit distance among $n$ points in $\RR^{2r+1}$ is $\binom{r}{2}(n/r)^2+\Theta(n^{4/3})$. From this perspective,~\Cref{simplex_odd} can be viewed as a ``hypergraph'' extension of their result, although the passage to higher uniformity requires a much more delicate analysis.

We remark that the principal bottleneck in determining the leading constant in $\Theta(n^{k-2/3})$ is the unit-distance problem for points lying on a sphere in $\RR^3$. Indeed, our proof of~\Cref{simplex_odd} shows that every extremal point set must have a positive proportion of its points lying on such a sphere. Consequently, obtaining an asymptotically sharp count of the regular $(k-1)$-simplices containing two points from this spherical subset would require an asymptotic solution to the corresponding unit-distance problem. However, the best known results on this problem determine only the order of magnitude~\cite{CEGSW90,EHP89}, and the leading constants in the lower and upper bounds remain far apart. Therefore, any further refinement of~\Cref{simplex_odd} currently appears to be out of reach.

Let us quickly compare the proofs of~\Cref{simplex,simplex_odd}. The proof of~\Cref{simplex} in~\cite{CDL26} started by exploiting linear algebraic constraints to identify forbidden configurations, and then applied methods from hypergraph Tur\'an theory to bound $S^k_{2r}(n)$. However, many of the algebraic constraints exploited there are inherently even-dimensional and thus fail in odd dimensions. To address this issue, our proof of~\Cref{simplex_odd} follows the same framework but replaces these missing algebraic constraints with geometric observations. This constitutes the main new ingredient of our proof.

The key to obtaining a sharper estimate for $S^k_{2r+1}(n)$ is to establish an odd-dimensional version of the stability theorem, which captures the structure of a nearly extremal point set in $\RR^{2r+1}$. It was shown in~\cite{CDL26} that every nearly extremal point set in $\RR^{2r}$ must be largely distributed among $r$ pairwise orthogonal circles with a common center and radius. However, the configuration of such point sets in odd dimensions tends to be more intricate. The difference arises from the fact that $\RR^{2r+1}$ cannot be spanned by $r$ circles with a common center. In particular, considerable effort is required to show that the additional dimension leads to only two configurations: Informally, either one of the $r$ circles expands into a $2$-dimensional sphere, or the $r$ circles are arranged in a skewed manner along the remaining free dimension.

To state our stability result, we introduce some terminology. We give only brief definitions here to avoid interrupting the flow of the presentation. Formal definitions of affine spaces, spheres, and orthogonality are provided in~\Cref{definitions}. The dimension of a sphere $C$, written as $\dim{C}$, is defined to be the dimension of its affine hull minus $1$. For example, a circle is a $1$-dimensional sphere. Two spheres are said to be orthogonal if the affine spaces spanned by them are orthogonal. The stability result can then be stated as follows.

\begin{theorem}
\label{simplex_stability}
Let $r\geq k\geq3$ be fixed integers. For every $\eps>0$ there exist $\delta>0$ and an integer $n_0>0$ such that the following holds for all integers $n\geq n_0$: Suppose that $X\subseteq\RR^{2r+1}$ is a set of $n$ points spanning at least $S^k_{2r+1}(n)-\delta n^k$ regular $(k-1)$-simplices. Then there exist a partition $X=A_0\dot\cup A_1\dot\cup\dots\dot\cup A_r$ with $|A_0|<\eps n$ and $|A_i|>n/r-\eps n$ for each $i\in[r]$, and pairwise orthogonal spheres $C_1,\dots,C_r\subseteq\RR^{2r+1}$ with $A_i\subseteq C_i$ for every $i\in[r]$, satisfying the following property. If $r\geq4$, then
\begin{enumerate}
    \item $\dim{C_1}=\dots=\dim{C_{r-1}}=1$ and $\dim{C_r}=2$;
    \item $C_1,\dots,C_r$ have the same center and the same radius.
\end{enumerate}
If $r=3$ (and hence $k=3$), then either the above holds, or
\begin{enumerate}
    \item[(1')] $C_1,C_2,C_3$ are circles with pairwise distinct centers;
    \item[(2')] the distance between any two points on different circles is the same.
\end{enumerate}
\end{theorem}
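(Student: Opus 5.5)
We will follow the framework of the even-dimensional stability theorem in~\cite{CDL26}: first obtain a combinatorial stability statement for the $k$-uniform hypergraph $H$ on $X$ whose edges are the regular $(k-1)$-simplices, and then convert it into geometry. By the lower bound in~\Cref{simplex}, $H$ has at least $\binom{r}{k}(n/r)^k-\delta n^k$ edges. The first step is to identify forbidden configurations in $H$, i.e.\ small hypergraphs that cannot be realized as regular simplices in $\RR^{2r+1}$ once their parts are large. The model is the following: if $K$ is a complete $(r+1)$-partite $k$-graph with large parts $V_1,\dots,V_{r+1}$, then all cross distances are equal. This forces each $V_i$ to lie on a sphere centered at the centroid-type point of the others, with pairwise orthogonal affine hulls. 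Since a set of $\ge 3$ points needs dimension $\ge 1$ for its sphere, $r+1$ mutually orthogonal affine pieces of dimension $\ge 2$ would need $2(r+1)>2r+1$ dimensions. So the blow-up of $K^{(k)}_{r+1}$ is forbidden, via a supersaturation/removal argument. The standard hypergraph Tur\'an stability (the Erd\H{o}s--Simonovits-type stability for blow-ups of $K^{(k)}_{r+1}$, together with the hypergraph removal lemma) then yields a partition $X=A_0\dot\cup A_1\dot\cup\dots\dot\cup A_r$ with $|A_0|<\eps n$, $|A_i|>n/r-\eps n$, such that almost all $k$-tuples meeting $k$ distinct parts are edges. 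After a cleaning step we may move vertices of low "crossing degree" into $A_0$, so that every vertex of $A_i$ forms simplices with a positive proportion of transversal $(k-1)$-tuples of the other parts.

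The second step extracts geometry. Using the cleaned structure, for any $i\neq j$ and any fixed $k-2$ vertices in other parts, the common neighbourhoods are large. Hence every point of $A_i$ is equidistant (at a common distance $\lambda$) from almost all points in $\bigcup_{j\ne i}A_j$, and by a density/pigeonhole argument from all of them after discarding $\eps n$ more points. Consequently each $A_i$ lies in the intersection of spheres centered at points of the other parts. This gives a sphere $C_i$ whose affine hull $W_i$ is orthogonal to the differences of points of $A_j$, so the linear spaces $\vec W_i$ are pairwise orthogonal. Each $|A_i|$ is large and $A_i$ spans regular simplices with the rest, so $A_i$ is not collinear, i.e.\ $\dim \vec W_i\ge 2$. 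Since $\sum\dim\vec W_i\le 2r+1$, at most one $W_i$ has dimension $3$ and all others have dimension $2$.

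The third step, which we expect to be the main obstacle, is the case analysis on the extra dimension. \emph{Case A:} some $\dim W_r=3$. Then the $\vec W_i$ together span $\RR^{2r+1}$. Equidistance of all cross pairs, $|x-y|^2=\rho_i^2+\rho_j^2+|c_i-c_j|^2$ after projection, forces a common center and the relation $\rho_i^2+\rho_j^2=\lambda^2$ for all $i\ne j$. With $r\ge3$ this gives all $\rho_i$ equal, which is conclusion (1),(2). \emph{Case B:} all $\dim W_i=2$, leaving a one-dimensional orthogonal complement $\ell$. Now the centers $c_i$ may differ along $\ell$, and the cross-distance condition reads $\rho_i^2+\rho_j^2+(t_i-t_j)^2=\lambda^2$. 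For $r\ge4$ we must rule out distinct $t_i$. Here one needs the extremality: configurations with skewed centers for $r\ge4$ either cannot satisfy these equations or, more subtly, lose the $\Omega(n^{k-2/3})$ gain. The plan is to show that the system $\rho_i^2+\rho_j^2+(t_i-t_j)^2=\lambda^2$ for four indices forces equal $t_i$ in the relevant regime, using that regular simplices inside a part also require intra-part distance $\lambda$. Failing a purely algebraic argument, we would compare counts with the construction attaining~\Cref{simplex}, exploiting the slack $\delta n^k$ only at the level of leading terms. For $r=3$ the skewed solutions genuinely exist and give (1'),(2'); if the $t_i$ coincide, we merge $\ell$ into one circle, use the fact that a part with extra dimension may be enlarged to a 2-sphere, and return to Case A.
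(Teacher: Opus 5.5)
\textbf{Gap 1: the forbidden configuration.} Your combinatorial step fails as stated, because a blow-up of the complete $k$-graph $K^{(k)}_{r+1}$ is the wrong thing to forbid. For $k\ge3$ its Tur\'an density is strictly larger than that of the balanced complete $r$-partite $k$-graph. For $k=r=3$, Tur\'an's construction is $K^{(3)}_4$-free with about $5n^3/54$ edges, versus $n^3/27$. So at edge count $\binom rk(n/r)^k-\delta n^k$, being $K^{(k)}_{r+1}(t)$-free gives no structural information. For example, a random $\frac25$-fraction of the edges of Tur\'an's construction is $K^{(3)}_4$-free with about $n^3/27$ edges. Yet with high probability every $3$-partite subgraph of it has only about $\frac25\cdot\frac{n^3}{27}$ edges, since by Maclaurin any $3$-partite subgraph of the construction has at most $n^3/27$ edges. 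No Erd\H{o}s--Simonovits-type stability with an $r$-partite conclusion exists here, and the removal lemma does not change this.

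What you need is the much sparser expanded clique $H^{(k)}_{r+1}$, whose Tur\'an problem is solved by the $r$-partite construction with stability (\Cref{new:stability}, followed by \Cref{parts}). Your dimension count does forbid $H^{(k)}_{r+1}(3)$. You only used pairwise orthogonality of the $r+1$ classes, and every pair of core classes lies in a blown-up edge, to which \Cref{witzig} applies.

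\textbf{Gap 2: Case B aims at a false statement.} For every $r$, the system $\rho_i^2+\rho_j^2+(t_i-t_j)^2=\lambda^2$ has solutions with unequal $t_i$. Take $t_1=\dots=t_{r-1}=0$, $\rho_1=\dots=\rho_{r-1}=\lambda/\sqrt2$, $t_r=t$ and $\rho_r^2=\lambda^2/2-t^2$ with $0<|t|<\lambda/\sqrt2$. Every transversal $k$-tuple of these circles is a regular simplex. Neither algebra nor a count can rule this out; a count with slack $\delta n^k$ cannot see lower-order terms anyway.

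The correct target for $r\ge4$ is that at least $r-1$ of the centers coincide, and this is purely algebraic. Each of the following contradicts the equations: four distinct values among the $t_i$; three distinct values plus a repeated one; two values each taken at least twice. In the remaining $(r-1)+1$ case, the odd circle lies on the $2$-sphere of radius $\lambda/\sqrt2$ about the common center inside $c+(\vec W_r\oplus\ell)$. This sphere is orthogonal to the other circles, and replacing $C_r$ by it gives (1) and (2). For $r=3$ the dichotomy is ``two centers coincide'' (handled the same way) versus ``pairwise distinct'' (giving (1') and (2')), not ``all equal'' versus the rest.

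\textbf{Step 2 is also underdeveloped.} Passing from almost all to all cross pairs at distance $\lambda$ is not a pigeonhole matter. The paper needs:
\begin{enumerate}
\item common neighbourhoods of $3$-point sets in the other parts, together with \Cref{witzig};
\item affine hulls that are robust under deleting $\eps n$ points;
\item a missing-edge count showing that at most one part is $3$-dimensional, before orthogonality of the full parts is known;
\item \Cref{spheres_distance_center}(i), to extend equidistance to the whole spheres.
\end{enumerate}
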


Note that both nearly extremal configurations described in~\Cref{simplex_stability} can occur. The existence of the first is immediate. For the second, consider two unit circles $C_1$ and $C_3$ lying in $\mathbb{R}^2\times\{0\}^4\times\{1\}$ and $\{0\}^4\times\mathbb{R}^2\times\{3\}$, centered at $(0,0,0,0,0,0,1)$ and $(0,0,0,0,0,0,3)$, respectively. Then, let $C_2$ be a circle of radius $2$ lying in $\{0\}^2\times\mathbb{R}^2\times\{0\}^2\times\{2\}$ centered at $(0,0,0,0,0,0,2)$. In this configuration every triangle consisting of three points from different circles is equilateral.

The paper is organized as follows. In~\Cref{sec:preliminary}, we give necessary definitions and collect auxiliary lemmas. In~\Cref{sec:stability} we prove~\Cref{simplex_stability}, and in~\Cref{sec:number} we establish~\Cref{simplex_odd}.

\section{Preliminaries}
\label{sec:preliminary}
\subsection{Affine spaces, spheres, and orthogonality}
\label{definitions}
A non-empty subset $\mathcal{A}\subseteq\mathbb{R}^d$ is called an affine space if there exist an element $x\in\mathbb{R}^d$ and a linear subspace $\mathcal{S}\subseteq\mathbb{R}^d$ such that
\[\mathcal{A}=x+\mathcal{S}:=\{x+s:\,s\in\mathcal{S}\}.\]
The linear subspace $\mathcal{S}$ is referred to as the associated linear space of $\mathcal{A}$. Note that the associated linear space of an affine space is uniquely determined.

The dimension of an affine space $\mathcal{A}\subseteq\mathbb{R}^d$, written $\dim\mathcal{A}$, is defined as the dimension of the linear space associated with $\mathcal{A}$. Two affine spaces are called orthogonal whenever their associated linear spaces are orthogonal. Consequently, if $\mathcal{A}_1,\dots,\mathcal{A}_r\subseteq\mathbb{R}^d$ are pairwise orthogonal affine spaces, then $\sum_{i\in[r]}\dim\mathcal{A}_i\leq \dim\RR^d=d$.

For a nonempty subset $A\subseteq\RR^d$, we write $\mathrm{Aff}(A)$ for the affine space generated by $A$. That is,
\[\mathrm{Aff}(A):=a+\mathrm{Span}(A-a)\quad\text{for any $a\in A$,}\]
where $\mathrm{Span}(A-a)$ denotes the linear subspace of $\RR^d$ spanned by $A-a$.

Given $d\in\mathbb{N}$, a $(d-1)$-dimensional sphere is a set of the form $\{x\in\mathcal{A}:\,||x-c||=\gamma\}$, where $\mathcal{A}$ is a $d$-dimensional affine space, $c\in\mathcal{A}$, and $\gamma>0$. The point $c$ and the number $\gamma$ are called the center and the radius of the sphere, respectively. In particular, a $0$-dimensional sphere consists of two points, while a $1$-dimensional sphere is a circle. Two spheres are said to be orthogonal if the affine spaces spanned by them are orthogonal.

The following lemma reveals a connection between forcing regular simplices and the orthogonality of affine spaces.
\begin{lemma}[{\cite[Lemma~15]{CDL26}}]
\label{witzig}
Let $d\geq1$ and $k\geq3$ be integers. Let $A_1,\dots,A_k\subseteq \RR^d$ with $\lvert{A_i}\rvert\geq3$ for each $i\in[k]$. If every $k$-tuple $(a_1,\dots,a_k)\in \prod_{i\in[k]}A_i$ forms a regular $(k-1)$-simplex, then $\mathrm{Aff}(A_1),\dots,\mathrm{Aff}(A_k)$ are pairwise orthogonal affine spaces of dimension at least $2$. Furthermore, $A_1,\dots,A_k$ are subsets of $k$ spheres in $\mathrm{Aff}(A_1),\dots,\mathrm{Aff}(A_k)$, respectively.
\end{lemma}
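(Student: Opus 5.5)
We need to prove Lemma~\ref{witzig}: if $|A_i|\ge 3$ and every transversal $k$-tuple is a regular simplex, then the affine hulls are pairwise orthogonal, each of dimension at least $2$, and each $A_i$ lies on a sphere in its affine hull.

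The plan is to extract linear identities from the equidistance conditions, first for pairs of classes and then using a third class. Fix $i\neq j$. Since $k\ge3$, pick a third index $l\notin\{i,j\}$ and a point $c\in A_l$. For $a,a'\in A_i$ and $b\in A_j$, the simplex through $a,b,c$ (extended by arbitrary points of the other classes) gives $\|a-b\|=\|a-c\|=\|b-c\|$. Hence all distances between points of different classes are determined by any one of them. Indeed, $\|a-b\|=\|b-c\|=\|a'-b\|$ shows that $\|a-b\|$ does not depend on $a\in A_i$. By symmetry it does not depend on $b$ either, and chaining through the classes shows it is one common value $s$ for all cross pairs.

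\emph{Orthogonality.} For $a,a'\in A_i$ and $b,b'\in A_j$, expand $\|a-b\|^2=\|a-b'\|^2=\|a'-b\|^2=\|a'-b'\|^2=s^2$. The alternating sum $\|a-b\|^2-\|a-b'\|^2-\|a'-b\|^2+\|a'-b'\|^2$ equals $2\langle a-a',\,b'-b\rangle$, so it vanishes. Since differences $a-a'$ span the linear space associated with $\mathrm{Aff}(A_i)$, and likewise for $A_j$, the two affine hulls are orthogonal.

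\emph{Sphere structure.} Fix $b\in A_j$. Every $a\in A_i$ satisfies $\|a-b\|=s$. Let $p$ be the orthogonal projection of $b$ onto $\mathrm{Aff}(A_i)$. Then $\|a-p\|^2=s^2-\|b-p\|^2$, which is constant, so $A_i$ lies on a sphere in $\mathrm{Aff}(A_i)$ centred at $p$. The radius is positive because $A_i$ has at least $3$ distinct points, which cannot all coincide with $p$.

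\emph{Dimension at least $2$.} A sphere inside a $1$-dimensional affine space consists of at most $2$ points. Since $|A_i|\ge3$ and $A_i$ lies on such a sphere in $\mathrm{Aff}(A_i)$, we get $\dim\mathrm{Aff}(A_i)\ge2$.

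The only step needing care is showing that the cross distance is independent of the chosen pair. This is exactly where $k\ge3$ is used, via the third class $A_l$; with $k=2$ the claim would fail. Everything else is direct computation.
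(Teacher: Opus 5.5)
Your proof is correct and complete. The constant cross-distance argument via a third class is the only place $k\geq3$ is needed, and the remaining steps all check out: the identity $2\langle a-a',b'-b\rangle=0$ gives orthogonality, Pythagoras with the projection of a point $b\in A_j$ gives the sphere, and a sphere in a line has only two points, which gives the dimension bound.

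This paper does not prove the lemma; it imports it from \cite[Lemma~15]{CDL26}, so there is no proof here to compare against. Your argument is a natural self-contained route. It also shows that the center of the sphere containing $A_i$ is the projection onto $\mathrm{Aff}(A_i)$ of any point of another class, a fact the paper uses implicitly later.
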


The next lemma concerns the geometric properties of a collection of pairwise orthogonal spheres.
\begin{lemma}[{\cite[Lemma~16]{CDL26}}]
\label{spheres_distance_center}
Let $d\geq1$ and $k\geq3$ be integers. Let $C_1,\dots,C_k\subseteq\mathbb{R}^d$ be a collection of pairwise orthogonal spheres. For each $i\in[k]$ let $A_i\subseteq C_i$ be such that $\mathrm{Aff}(A_i)=\mathrm{Aff}(C_i)$.
\begin{enumerate}[(i)]
    \item If every $k$-tuple in $\prod_{i\in[k]}A_i$ forms a regular $(k-1)$-simplex, then every $k$-tuple in $\prod_{i\in[k]}C_i$ forms a regular $(k-1)$-simplex.
    \item If every $k$-tuple in $\prod_{i\in[k]}C_i$ forms a regular $(k-1)$-simplex and $\sum_{i\in[k]}\dim\mathrm{Aff}(C_i)=d$, then $C_1,\dots,C_k$ have the same center.
\end{enumerate}
\end{lemma}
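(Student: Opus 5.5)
The statement to prove is Lemma~\ref{spheres_distance_center}, quoted from~\cite{CDL26}. I will prove both parts by writing each point in coordinates adapted to the orthogonal decomposition. For each $i\in[k]$ let $c_i$ be the center and $\gamma_i$ the radius of $C_i$, and let $\mathcal{S}_i$ be the associated linear space of $\mathrm{Aff}(C_i)$. The key identity is the following: for $x\in C_i$ and $y\in C_j$ with $i\neq j$,
\[\|x-y\|^2=\|(x-c_i)+(c_i-c_j)+(c_j-y)\|^2 .\]
Here $x-c_i\in\mathcal{S}_i$ and $y-c_j\in\mathcal{S}_j$, and these two spaces are orthogonal. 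Expanding gives
\[\|x-y\|^2=\gamma_i^2+\gamma_j^2+\|c_i-c_j\|^2+2\langle x-c_i,c_i-c_j\rangle-2\langle y-c_j,c_i-c_j\rangle .\]
So the squared distance equals a constant plus an affine function of $x$ alone plus an affine function of $y$ alone.

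For (i), fix $i\ne j$ and choose a third index $l$ (possible since $k\ge3$). Take any $k$-tuple in $\prod_m A_m$. All of its pairwise distances are equal, so $\|x-y\|^2=\|x-z\|^2$ for $x\in A_i$, $y\in A_j$, $z\in A_l$. We may vary $x$ over $A_i$ while holding the other coordinates fixed, and both sides change via affine functions of $x$. Hence the map
\[x\mapsto \langle x-c_i,\,c_l-c_j\rangle\]
is constant on $A_i$. An affine function that is constant on $A_i$ is constant on $\mathrm{Aff}(A_i)=\mathrm{Aff}(C_i)$. By the same token, each pairwise squared distance $\|x-y\|^2$ is constant on $A_i\times A_j$. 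Since it is affine in $x$ and in $y$ separately, it is also constant on $C_i\times C_j$. Now fix base points $a_m\in A_m$. The common value $\|x-y\|^2$ on $C_i\times C_j$ equals $\|a_i-a_j\|^2$, and these values coincide for all pairs $i\neq j$ because $(a_1,\dots,a_k)$ is regular. So every tuple in $\prod C_i$ is regular. The only subtlety is to do the extension one coordinate at a time: vary $x$ with $y\in A_j$, conclude constancy on $C_i$, then vary $y$.

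For (ii), the dimension assumption says $\RR^d=\bigoplus_i\mathcal{S}_i$. From constancy of the squared distance on $C_i\times C_j$, the linear functional $v\mapsto\langle v,c_i-c_j\rangle$ vanishes on the directions $x-c_i$, which span $\mathcal{S}_i$. Symmetrically it vanishes on $\mathcal{S}_j$. It remains to show $c_i-c_j$ is orthogonal to every $\mathcal{S}_l$ with $l\notin\{i,j\}$. Constancy of $\|x-z\|^2-\|y-z\|^2$ as $z$ varies over $C_l$ gives $\langle z-c_l,\,c_i-c_j\rangle$ constant in $z$. This functional is affine in $z$ and constant on a sphere spanning $\mathrm{Aff}(C_l)$, so it is zero on $\mathcal{S}_l$. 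Then $c_i-c_j$ is orthogonal to every $\mathcal{S}_m$, hence to all of $\RR^d$, so $c_i=c_j$.

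I expect the main obstacle to be the bookkeeping in (ii). One must check that the affine dependence on $z$ forces orthogonality to $\mathcal{S}_l$, using that the points of $C_l$ span $\mathcal{S}_l$ relative to $c_l$. This holds because a sphere affinely spans its hull and its center lies in that hull.
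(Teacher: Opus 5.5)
The paper gives no proof of this lemma; it is imported verbatim from \cite[Lemma~16]{CDL26}, so there is no in-paper argument to compare against. Judged on its own, your proof is correct. Its core fact is that $c_i-c_j$ is orthogonal to $\mathcal{S}_i$ and $\mathcal{S}_j$, i.e.\ the projection of any point of $C_i$ onto $\mathrm{Aff}(C_j)$ is $c_j$. This is exactly the property the paper later uses in the proof of \Cref{bad_centers} and in \Cref{dimension7}.

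A few steps should be tightened.

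\textbf{Constancy on $A_i\times A_j$.} The constancy of $\|x-y\|$ on $A_i\times A_j$ does not follow ``by the same token'' from your affine computation. It follows directly from regularity and $k\ge3$. Fix $a_m\in A_m$ for all $m$ and a third index $l$. For $x\in A_i$, $y\in A_j$, two regular tuples give $\|x-y\|=\|x-a_l\|$ and $\|x-a_l\|=\|a_j-a_l\|$. So all cross distances equal the single side length of $(a_1,\dots,a_k)$. With this, your first computation involving $c_l-c_j$ is not needed for (i) at all. The same third-index argument is also what gives one common side length on $\prod_i C_i$ in (ii), which you use there without comment.

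\textbf{Why the constant is zero.} Constancy of $x\mapsto\langle x-c_i,c_i-c_j\rangle$ on $C_i$ only shows that it equals some constant. That this constant is $0$ comes from extending to $\mathrm{Aff}(C_i)\ni c_i$, or from the antipode $2c_i-x\in C_i$. You mention this only in your closing remark; it belongs at the point where you claim the functional vanishes.

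\textbf{The indices $l\notin\{i,j\}$.} This case in (ii) needs no separate computation. Write $c_i-c_j=(c_i-c_l)-(c_j-c_l)$. Both terms are orthogonal to $\mathcal{S}_l$ by the pairwise statement you have already proved for the pairs $\{i,l\}$ and $\{j,l\}$.
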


\subsection{Results from extremal graph theory}
Given an integer $k\geq2$, a $k$-graph $H$ is an ordered pair $(V(H),E(H))$, where $V(H)$ is the set of vertices and $E(H)\subseteq\binom{V(H)}{k}$ is the set of edges. Let $v(H)$ and $e(H)$ denote the numbers of vertices and edges in $H$, respectively.

A $k$-graph $H$ is said to be $r$-partite for some integer $r\geq k$ if there exists a partition $V(H)=V_1\dot\cup V_2\dot\cup\dots\dot\cup V_r$, such that every edge in $H$ has at most one vertex from each part $V_i$. 

For a $k$-graph $H$, the extremal number $\mathrm{ex}(n,H)$ is defined as the largest number of edges in an $n$-vertex $k$-graph containing no copy of $H$. Given an integer $t\geq1$, the $t$-blowup of $H$, denoted by $H(t)$, is a $k$-graph obtained from $H$ by replacing every vertex $v\in V(H)$ with an independent set $I_v$ of size $t$ and replacing every edge $e=\{v_1,\dots,v_k\}\in E(H)$ with a complete $k$-partite $k$-graph spanned
by $I_{v_1}\dot\cup\dots\dot\cup I_{v_k}$.

\begin{definition}
Given integers $r\geq k\geq3$, $H^{(k)}_{r+1}$ is defined as a $k$-graph obtained from a complete graph $K_{r+1}$ by enlarging each edge with $k-2$ new vertices, where all newly added vertices are distinct.
\end{definition}

In what follows, we collect several results proved in~\cite{CDL26}. These results will be used in the proof of~\Cref{simplex_stability} to extract the underlying structure of the point set.

\begin{lemma}[{\cite[Lemma~10]{CDL26}}]
\label{new:stability}
Let $r\geq k\geq3$ and $t\geq1$ be fixed integers. For any $\eps>0$ there exists $\delta>0$ such that the following holds. Let $G$ be an $n$-vertex $k$-graph that is $H^{(k)}_{r+1}(t)$-free. If $G$ has at least $\binom{r}{k}\left(\frac{n}{r}\right)^k-\delta n^k$ edges, then $G$ contains an $r$-partite $k$-subgraph with at least $\binom{r}{k}(n/r)^k-\eps n^k$ edges.
\end{lemma}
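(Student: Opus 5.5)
The plan is to reduce the blown-up forbidden graph $H^{(k)}_{r+1}(t)$ to the single graph $H:=H^{(k)}_{r+1}$, using supersaturation and the hypergraph removal lemma, and then to invoke the known stability theorem for expanded cliques. Write $h:=v(H)=(r+1)+(k-2)\binom{r+1}{2}$. The standard Erd\H{o}s--Simonovits supersaturation-for-blowups principle says the following: for every $\gamma>0$ and $t$ there is $n_1$ such that every $n$-vertex $k$-graph with at least $\gamma n^{h}$ copies of $H$ (for $n\ge n_1$) contains $H(t)$. It is proved by averaging over $h$-sets and applying the Kővári--Sós--Turán/Erd\H{o}s bound for $h$-partite $h$-graphs to the auxiliary $h$-graph of copies. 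Since $G$ is $H(t)$-free, it therefore contains fewer than $\gamma n^{h}$ copies of $H$, where $\gamma$ will be chosen later depending on $\eps$. For small $n$ the statement is trivial after shrinking $\delta$: if $\delta$ is small relative to $n_1^{-k}$, then the hypothesis cannot hold nontrivially, or the required conclusion is immediate.

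Next, I would apply the hypergraph removal lemma (R\"odl--Skokan / Gowers / Nagle--R\"odl--Schacht). With $\gamma=\gamma(\eta)$ small enough, we can delete at most $\eta n^k$ edges of $G$ to obtain an $H$-free $k$-graph $G'$. This $G'$ has at least $\binom{r}{k}(n/r)^k-(\delta+\eta)n^k$ edges.

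Now I would use Mubayi's theorem that $\mathrm{ex}(n,H^{(k)}_{r+1})$ is attained by the balanced complete $r$-partite $k$-graph $T_k(n,r)$ for $r\ge k$, together with its stability version. The stability version states: for every $\eps'>0$ there is $\eta'>0$ such that every $H$-free $n$-vertex $k$-graph with at least $\binom{r}{k}(n/r)^k-\eta' n^k$ edges can be made isomorphic to $T_k(n,r)$ by changing at most $\eps' n^k$ edges. Choose $\delta,\eta\le\eta'/2$. Then $G'$ admits a partition $V_1\dot\cup\dots\dot\cup V_r$ in which all but at most $\eps' n^k$ edges of $T_k(n,r)$ are present in $G'$, and hence in $G$. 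Keeping only the edges of $G$ that meet each part at most once gives an $r$-partite $k$-subgraph with at least $\binom{r}{k}(n/r)^k-O(\eps')n^k$ edges; since $T_k(n,r)$ has $\binom{r}{k}(n/r)^k-O(n^{k-1})$ edges, the lower-order loss can be absorbed. Choosing $\eps'=\eps/2$ and $n$ large then finishes the argument, since a smaller $\delta$ covers small $n$.

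I expect the main obstacle to be the stability theorem for expanded cliques in this form. If I could not cite it directly, I would prove it in three steps. First, use the Lagrangian/symmetrization method: $H$-freeness of $G'$ forces every pair of vertices to be covered by few edges in a way that bounds the Lagrangian by that of $K^{(k)}_r$. Second, delete low-degree vertices to get minimum degree close to $\binom{r-1}{k-1}(n/r)^{k-1}$. Third, define parts via the ``link graph'' non-adjacency relation and show that it is nearly an equivalence relation with $r$ classes, because any $r+1$ vertices pairwise covered by disjoint edges would build a copy of $H$. The remaining steps (supersaturation and removal) are routine applications of standard tools.
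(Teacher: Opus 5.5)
The paper itself gives no proof of this lemma; it is quoted from \cite{CDL26}, so there is no in-paper argument to compare with. Your main chain is the standard way to transfer stability from $H=H^{(k)}_{r+1}$ to its blow-up, and each step is valid once $n$ is large (take the maximum of the thresholds of the three black boxes). The chain runs as follows:
\begin{enumerate}
\item Erd\H{o}s--Simonovits supersaturation turns $H(t)$-freeness into fewer than $\gamma n^{h}$ copies of $H$.
\item The hypergraph removal lemma deletes $\eta n^k$ edges to reach an $H$-free $G'$.
\item Mubayi's stability theorem for expanded cliques is applied to $G'$.
\item You keep the crossing edges of the resulting partition.
\end{enumerate}

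Two small remarks. First, Mubayi proved the asymptotic result and the stability theorem for $H^{(k)}_{r+1}$. The exact identity $\mathrm{ex}(n,H^{(k)}_{r+1})=t_k(n,r)$ for large $n$ is due to Pikhurko. You only use stability, so this is harmless. Second, your fallback sketch says $H$-freeness ``forces every pair to be covered by few edges''. That is not right: crossing pairs in $T_k(n,r)$ have codegree $\Theta(n^{k-2})$. The usable mechanism is to delete all edges through pairs of codegree less than $v(H)$, which costs only $O(n^2)$ edges. Afterwards any $r+1$ pairwise covered vertices extend greedily to a copy of $H$, so the $2$-shadow is $K_{r+1}$-free. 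Nothing in your argument depends on the sketch, though.

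The step that genuinely fails is your treatment of small $n$. You assert that shrinking $\delta$ makes the hypothesis vacuous or the conclusion immediate; neither holds.

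Take $n=r+1$ and $G=K^{(k)}_{r+1}$, the complete $k$-graph. Since $r+1<v(H)\le v(H^{(k)}_{r+1}(t))$, it is $H^{(k)}_{r+1}(t)$-free. Since $n\ge r$, comparing $\prod_{i<k}(1-i/n)$ with $\prod_{i<k}(1-i/r)$ gives $\binom{n}{k}\ge\binom{r}{k}(n/r)^k$. So the hypothesis holds for every $\delta>0$.

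On the other hand, every $r$-partite subgraph has at most $t_k(r+1,r)$ edges. This is strictly less than $\binom{r}{k}\bigl(\frac{r+1}{r}\bigr)^k$, because Maclaurin's inequality is strict when the part sizes cannot all equal $(r+1)/r$. Hence the conclusion fails once $\eps$ is below this fixed gap divided by $(r+1)^k$. For instance, with $r=k=3$, the graph $K^{(3)}_4$ has best $3$-partite subgraph with $2<64/27$ edges.

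So the lemma as printed, with no threshold on $n$, is false, and no choice of $\delta$ rescues it. It must be read as: there exist $\delta>0$ and $n_0$ such that the conclusion holds for all $n\ge n_0$. This is harmless for the paper, which applies the lemma only for $n\ge n_0$ and may enlarge $n_0$. You should state the lemma with $n_0$ rather than claim that small $n$ is trivial.
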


\begin{lemma}[{\cite[Lemma~17]{CDL26}}]
\label{Hfree}
Let $r\geq k\geq3$ be integers. Let $G$ be a $k$-graph, whose vertices are points in $\mathbb{R}^{2r+1}$ and whose edges are the regular $(k-1)$-simplices spanned by its vertices. Then $G$ is $H^{(k)}_{r+1}(3)$-free.
\end{lemma}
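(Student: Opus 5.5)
The plan is to argue by contradiction: assume $G$ contains a copy of $H^{(k)}_{r+1}(3)$, and combine \Cref{witzig} with a dimension count to get a contradiction.

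Label the vertices of the underlying complete graph $K_{r+1}$ by $1,\dots,r+1$. For each $v\in[r+1]$, let $I_v$ be the independent set of size $3$ that replaces $v$ in the blowup. Since the vertices of $G$ are points of $\RR^{2r+1}$, each $I_v$ is a set of three distinct points of $\RR^{2r+1}$.

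Now fix a pair $u\neq v$ in $[r+1]$. In $H^{(k)}_{r+1}$ the edge $uv$ of $K_{r+1}$ is enlarged to a $k$-edge $\{u,v,w_1,\dots,w_{k-2}\}$ using new vertices $w_j$. In the $3$-blowup this edge becomes the complete $k$-partite $k$-graph on $I_u\dot\cup I_v\dot\cup I_{w_1}\dot\cup\dots\dot\cup I_{w_{k-2}}$. Every part has size $3$. Since this is a copy inside $G$, every $k$-tuple in $I_u\times I_v\times\prod_j I_{w_j}$ is an edge of $G$, i.e.\ forms a regular $(k-1)$-simplex. Hence \Cref{witzig} applies to these $k$ sets. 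It gives that $\mathrm{Aff}(I_u)$ and $\mathrm{Aff}(I_v)$ are orthogonal affine spaces, each of dimension at least $2$.

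Doing this for every pair $u\neq v$ shows that $\mathrm{Aff}(I_1),\dots,\mathrm{Aff}(I_{r+1})$ are pairwise orthogonal and each has dimension at least $2$. Three points span an affine space of dimension at most $2$, so each of these spaces has dimension exactly $2$. By the remark in \Cref{definitions} on pairwise orthogonal affine spaces, this gives
\[2(r+1)\le\sum_{v\in[r+1]}\dim\mathrm{Aff}(I_v)\le 2r+1,\]
a contradiction. Therefore $G$ is $H^{(k)}_{r+1}(3)$-free.

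There is no real obstacle here. The only points to check are that the hypothesis $|A_i|\ge 3$ of \Cref{witzig} is met, which is why the blowup parameter is $3$, and that a single enlarged edge containing both $u$ and $v$ already forces orthogonality of $\mathrm{Aff}(I_u)$ and $\mathrm{Aff}(I_v)$, so no interaction between different enlarged edges is needed.
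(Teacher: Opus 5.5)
Your proof is correct: applying \Cref{witzig} to the $k$ blown-up classes of each enlarged edge makes $\mathrm{Aff}(I_1),\dots,\mathrm{Aff}(I_{r+1})$ pairwise orthogonal with dimension at least $2$ each, and $2(r+1)>2r+1$ gives the contradiction. This paper imports the lemma from \cite{CDL26} without reproducing a proof, but your argument is the same combination of \Cref{witzig} with a dimension count that the paper uses in the proof of \Cref{lustig}, so it is essentially the intended route.
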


The following lemma has been widely used in the literature (see, e.g.,~\cite[Lemma~13]{CDL26} for a proof).
\begin{lemma}
\label{parts}
Let $r\geq k\geq2$ be fixed integers. For any $\eps>0$ there exists $\delta>0$ such that the following holds. If an $n$-vertex $r$-partite $k$-graph has at least $\binom{r}{k}(n/r)^k-\delta n^k$ edges, then each part must be of size at least $n/r-\eps n$.
\end{lemma}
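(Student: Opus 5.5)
The plan is to reduce the statement to a continuous optimization problem on the simplex and then use compactness. Let $G$ be an $n$-vertex $r$-partite $k$-graph with parts $V_1,\dots,V_r$, and write $x_i=|V_i|/n$, so that $x_i\geq0$ and $\sum_{i\in[r]}x_i=1$. Every edge of $G$ meets $k$ distinct parts in one vertex each. Hence
\[e(G)\leq \sum_{I\in\binom{[r]}{k}}\prod_{i\in I}|V_i| = n^k\, e_k(x_1,\dots,x_r),\]
where $e_k$ denotes the $k$-th elementary symmetric polynomial. It therefore suffices to prove the following: for every $\eps>0$ there is $\delta>0$ such that whenever $x$ lies in the simplex $\Delta=\{x\in\RR^r_{\geq0}:\sum_i x_i=1\}$ and some $x_j<1/r-\eps$, we have $e_k(x)<\binom{r}{k}r^{-k}-\delta$. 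Indeed, if some part had size less than $n/r-\eps n$, then $e(G)<\binom{r}{k}(n/r)^k-\delta n^k$, contradicting the hypothesis.

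The key step is to show that $e_k$ has a unique maximizer on $\Delta$, namely the uniform point $u=(1/r,\dots,1/r)$, where its value is $\binom{r}{k}r^{-k}$. I will use Maclaurin's inequality for nonnegative reals,
\[\frac{e_k(x)}{\binom{r}{k}}\leq\left(\frac{e_1(x)}{r}\right)^k,\]
with equality, for $k\geq2$, only if all $x_i$ are equal. Since $e_1(x)=1$ on $\Delta$, this gives $e_k(x)\leq\binom{r}{k}r^{-k}$, with equality exactly at $x=u$.

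The equality case needs some care when some coordinates vanish. If I prefer a self-contained argument, I will use a smoothing step: if $x_i\neq x_j$, write
\[e_k(x)=x_ix_jP+(x_i+x_j)Q+R,\]
where $P,Q,R$ depend only on the other coordinates. Replacing $x_i,x_j$ by their average strictly increases $e_k$ whenever $P=e_{k-2}(x_\ell:\ell\neq i,j)>0$. The degenerate case $P=0$ means that fewer than $k-2$ of the other coordinates are positive. Then $x$ has at most $k-1$ nonzero coordinates, so $e_k(x)=0<\binom{r}{k}r^{-k}$ and such a point is not a maximizer. A maximizer exists by compactness, so any maximizer has all coordinates equal.

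To finish, fix $\eps>0$ and consider the compact set $K=\{x\in\Delta:\min_j x_j\leq 1/r-\eps\}$. It does not contain $u$, and $e_k$ is continuous, so $\max_K e_k<\binom{r}{k}r^{-k}$. Choose $\delta$ to be half the gap. The only mildly delicate point is the strictness of the equality case in Maclaurin's inequality on the boundary of the simplex, which the smoothing argument above handles. Everything else is routine.
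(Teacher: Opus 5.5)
Your proof is correct and complete. The paper does not prove this lemma itself; it only cites \cite[Lemma~13]{CDL26}, so there is no in-paper argument to compare yours against. The one related step the paper does use, in the proof of Theorem~\ref{simplex_stability}, is the Maclaurin bound $e(G)\le\binom{r}{k}(n/r)^k$ for $r$-partite $k$-graphs, which is exactly your first step $e(G)\le n^k e_k(x)$. The rest also checks out. In the smoothing step, $P=e_{k-2}(x_\ell:\ell\neq i,j)$ vanishes only if at most $k-3$ of the other coordinates are positive, and then $e_k(x)=0$; for $k=2$ one has $P=1$. So $u$ is the unique maximizer on $\Delta$, and compactness of $K$ gives a positive gap.

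Two small comments. First, the equality case of Maclaurin is not delicate on the boundary of the simplex, so the smoothing detour is optional. Write $S_j=e_j/\binom{r}{j}$. The chain $S_1\ge S_2^{1/2}\ge S_k^{1/k}$ holds for nonnegative reals, and $S_1^2-S_2=\frac{1}{r^2(r-1)}\sum_{i<j}(x_i-x_j)^2$. Hence $S_1=S_k^{1/k}$ already forces all $x_i$ to be equal.

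Second, compactness gives no explicit $\delta$, and when $\eps\ge 1/r$ the set $K$ is empty, so the claim is vacuous. If you wanted a quantitative version, fix the small part, say $x_j=t\le 1/r-\eps$. Apply Maclaurin separately to $e_{k-1}$ and $e_k$ of the remaining $r-1$ coordinates to get $e_k(x)\le f(t):=t\binom{r-1}{k-1}\bigl(\frac{1-t}{r-1}\bigr)^{k-1}+\binom{r-1}{k}\bigl(\frac{1-t}{r-1}\bigr)^{k}$. A direct computation gives $f'(t)=\binom{r-1}{k-1}\frac{k-1}{(r-1)^k}(1-t)^{k-2}(1-rt)$. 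So $f$ increases on $[0,1/r]$ and attains its maximum $f(1/r)=\binom{r}{k}r^{-k}$ there. Integrating $f'$ over $[1/r-\eps,1/r]$ yields an explicit $\delta=\Omega_{r,k}(\eps^2)$, which is the correct order. The lemma as stated, and its use in the paper, only needs some $\delta>0$, so your soft argument is fully adequate.
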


\section{The structure of a nearly extremal point set}
\label{sec:stability}
In this section we prove our stability result, i.e.,~\Cref{simplex_stability}.
\begin{proof}[Proof of~\Cref{simplex_stability}]
Let $r\geq k\geq3$ be fixed integers. Let $\eps>0$ be given. We choose the parameters $\eps_0,\eps_1,\eps_2,\delta>0$ according to the hierarchy
\[0<\delta \ll \eps_2 \ll \eps_1 \ll \eps_0 \ll \eps,\]
where the parameters are chosen successively from right to left, sufficiently small such that the hypotheses of the relevant lemmas are satisfied. In addition, assume without loss of generality that
\[\eps<(1000)^{-k}r^{-1000k}.\]
Since $S^k_{2r+1}(n)=\binom{r}{k}(n/r)^k+o(n^k)$ by~\Cref{simplex}, there exists some $n_0\in\NN$ such that $S^k_{2r+1}(n)-\delta n^k>\binom{r}{k}(n/r)^k-\eps_2n^k$ holds for all $n\geq n_0$. Suppose $n\geq n_0$ throughout the proof.

Let $X\subseteq\RR^{2r+1}$ be a set of $n$ points that spans at least $S^k_{2r+1}(n)-\delta n^k$ regular $(k-1)$-simplices. Let $G$ be a $k$-graph, whose vertex set is $X$ and whose edges correspond to regular $(k-1)$-simplices spanned by $X$.

By~\Cref{Hfree}, $G$ is $H^{(k)}_{r+1}(3)$-free. Since $G$ has at least
\[S^k_{2r+1}(n)-\delta n^k>\binom{r}{k}(n/r)^k-\eps_2n^k\]
edges,~\Cref{new:stability} guarantees that $G$ contains an $r$-partite $k$-subgraph $M$ with parts $V_1\dot\cup\dots\dot\cup V_r$ and at least $\binom{r}{k}(n/r)^k-\eps_1n^k$ edges. Moreover, by~\Cref{parts} we have $|V_i|\geq n/r-\eps_0n$ for all $i\in[r]$. A missing edge in $M$ is called transversal if it contains at most one vertex from each part $V_i$. By Maclaurin's inequality~\cite{BP03}, every $n$-vertex $r$-partite $k$-graph has at most $\binom{r}{k}(n/r)^k$ edges. It follows that $M$ has at most $\eps_1n^k$ missing transversal edges.

\begin{claim}
\label{lustig}
There exists $A_i\subseteq V_i$ for each $i\in[r]$, such that
\begin{enumerate}[(i)]
    \item $|A_i|>n/r-(3r)^k\eps  n$ and $\mathrm{Aff}(A_i)$ is an affine space with $2\leq\dim\mathrm{Aff}(A_i)\leq3$;
    \item there are spheres $C_1,\dots,C_r$, such that $A_i\subseteq C_i \subseteq\mathrm{Aff}(A_i)$ for all $i\in[r]$;
    \item $\mathrm{Aff}(A_i)=\mathrm{Aff}(A_i')$ for any $A_i'\subseteq A_i$ of size at least $|A_i|-\eps n$.
\end{enumerate}
\end{claim}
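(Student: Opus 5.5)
The plan is to build the sets $A_i$ in three stages: a cleaning step, an affine-hull stabilization step, and a geometric step that uses \Cref{witzig}. Since $M$ has at most $\eps_1 n^k$ missing transversal edges, call a vertex $v\in V_i$ \emph{bad} if it lies in more than $\sqrt{\eps_1}n^{k-1}$ missing transversal edges, and \emph{good} otherwise.

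\textbf{Cleaning.} Double counting shows there are at most $k\sqrt{\eps_1}n\ll\eps n$ bad vertices. Let $B_i\subseteq V_i$ be the set of good vertices.

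\textbf{Stabilizing the affine hulls.} Starting from $A_i:=B_i$, repeat the following. If some $A_i'\subseteq A_i$ with $|A_i'|\geq |A_i|-\eps n$ has $\mathrm{Aff}(A_i')\neq\mathrm{Aff}(A_i)$, then $\mathrm{Aff}(A_i')$ is a proper affine subspace, so its dimension is strictly smaller; replace $A_i$ by $A_i'$.
\begin{itemize}
\item The dimension is at most $2r+1$, so each index is replaced at most $2r+2$ times.
\item The total loss is at most $(2r+2)\eps n+k\sqrt{\eps_1}n+\eps_0 n<(3r)^k\eps n$ below $n/r$.
\item This gives the size bound in (i) and property (iii).
\end{itemize}
All remaining vertices are still good, since we only delete vertices.

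\textbf{Geometric step, for a fixed $i$.} Since $r\geq k$, choose indices $j_2,\dots,j_k$ distinct from $i$. Pick a set $D_i\subseteq A_i$ with $3\le|D_i|\le 2r+2$ and $\mathrm{Aff}(D_i)=\mathrm{Aff}(A_i)$: an affine basis, enlarged by arbitrary points if it has fewer than three elements.
\begin{itemize}
\item \emph{Common link.} Each point of $D_i$ is good, and the parts have size $\Theta(n)$. So the common link of the $O(r)$ points of $D_i$, together with an arbitrary extra point $a\in A_i$, in $A_{j_2}\times\dots\times A_{j_k}$ misses only an $O(r\sqrt{\eps_1})$-fraction of the $(k-1)$-tuples.
\item \emph{Complete subconfiguration.} By Erdős' theorem on complete $(k-1)$-partite subhypergraphs (supersaturation), this link contains a complete $(k-1)$-partite $(k-1)$-graph with parts $T_{j_2},\dots,T_{j_k}$ of size $3$.
\item \emph{Dimension at least $2$.} \Cref{witzig}, applied to $D_i\cup\{a\},T_{j_2},\dots,T_{j_k}$, shows $\dim\mathrm{Aff}(A_i)=\dim\mathrm{Aff}(D_i\cup\{a\})\geq2$. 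If $\dim\mathrm{Aff}(A_i)\le1$, the three chosen points would be collinear, which contradicts the lemma.
\item \emph{Sphere.} The same application of \Cref{witzig} places $D_i\cup\{a\}$ on a sphere inside $\mathrm{Aff}(A_i)$. The points of $D_i$ contain an affine basis of $\mathrm{Aff}(A_i)$, so there is a unique sphere $C_i\subseteq\mathrm{Aff}(A_i)$ through them. This sphere does not depend on $a$, hence $A_i\subseteq C_i$, which is (ii).
\end{itemize}

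\textbf{Upper bound on dimensions.} For any two indices $i\neq j$, choose a $k$-set of indices containing both; this is possible since $r\geq k$. Running the argument above simultaneously, with $D_i$ and $D_j$ as two of the parts and size-$3$ sets in the remaining parts taken from the common link, \Cref{witzig} gives that $\mathrm{Aff}(A_i)$ and $\mathrm{Aff}(A_j)$ are orthogonal. Thus $\mathrm{Aff}(A_1),\dots,\mathrm{Aff}(A_r)$ are pairwise orthogonal, so $\sum_i\dim\mathrm{Aff}(A_i)\le 2r+1$. Since every dimension is at least $2$, each is at most $3$, completing (i).

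\textbf{Main obstacle.} The delicate step is producing complete $k$-partite configurations that contain a prescribed affine basis $D_i$ in one or two parts. Applying the supersaturation theorem directly to $M$ would only give some $3$-sets, which need not span $\mathrm{Aff}(A_i)$. This is why the goodness of every vertex must survive the stabilization step. It is also why the parameter hierarchy $\eps_1\ll\eps$ must absorb the $O(r)$-fold intersection of links.
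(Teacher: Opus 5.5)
\textbf{Verdict: there is a genuine gap, in the step ``Upper bound on dimensions''.} Your cleaning, stabilization and single-index geometric steps are sound. Vertex-goodness does control the common link of the $O(r)$ points of $D_i\cup\{a\}$ in the $(k-1)$-fold product. Uniqueness of the sphere through an affine basis correctly gives (ii).

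\textbf{Why the step fails.} Once the fixed bases $D_i$ and $D_j$ occupy two of the $k$ parts, the common link you need lives in $\prod_{\ell\in\mathcal L}A_\ell$ with $|\mathcal L|=k-2$. That is a set of only $\Theta(n^{k-2})$ tuples. The bound of $\sqrt{\eps_1}n^{k-1}$ missing edges per vertex gives no control over it: a good vertex $d_i$ can fail to form a single edge of $M$ with a particular $d_j$ at a cost of only $O(n^{k-2})$ missing edges. Since $D_i$ and $D_j$ are arbitrary affine bases chosen independently, this common link may be empty, and \Cref{witzig} cannot be invoked. So your ``Main obstacle'' paragraph misdiagnoses the issue: $\eps_1\ll\eps$ absorbs intersections of vertex links, not of pair links.

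Nor can you fall back on the single-index step. It only makes $\mathrm{Aff}(A_i)$ orthogonal to $k-1$ pairwise orthogonal planes. That gives $\dim\mathrm{Aff}(A_i)\le 2r+1-2(k-1)$, which exceeds $3$ when $r>k$.

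\textbf{The missing idea: the auxiliary sets must be chosen jointly.}

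\emph{The paper's route.} For one index $i$ at a time, pick random $3$-sets $T_\ell$ in all $r-1$ other parts simultaneously. Require that, for every $(k-1)$-subset $\mathcal J$ of these indices, all tuples of $\prod_{\ell\in\mathcal J}T_\ell$ lie in the common neighbourhood of the set in question. This is a union bound over $\binom{r-1}{k-1}3^{k-1}$ tuples, which vertex-goodness does support. \Cref{witzig} then makes $\mathrm{Aff}(A_i)$ orthogonal to $r-1$ pairwise orthogonal spaces of dimension at least $2$, so $\dim\mathrm{Aff}(A_i)\le 3$. The paper in fact defines $A_r$ directly as such a common neighbourhood. This yields the sphere and the dimension bound at once, and (iii) then follows from at most one replacement.

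\emph{If you want pairwise orthogonality of the $\mathrm{Aff}(A_i)$ now.} Then you must choose $D_j$ depending on $D_i$:
\begin{enumerate}
\item Fix $3$-sets in $k-2$ other parts so that the bipartite link between $A_i$ and $A_j$ misses only $O(\eps_1 n^2)$ pairs.
\item Take $D_i$ among the vertices of $A_i$ with small non-degree; by (iii) these still span $\mathrm{Aff}(A_i)$.
\item Take $D_j$ inside the common neighbourhood of $D_i$ in $A_j$; this has size at least $|A_j|-\eps n$, so by (iii) it still spans $\mathrm{Aff}(A_j)$.
\end{enumerate}
This is essentially what the paper does later in \Cref{core} and \Cref{creme}.
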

\begin{proof}[Proof of~\Cref{lustig}]
We shall prove the statement for $A_r$, the cases of $A_1,\dots,A_{r-1}$ admit the same proof. For any distinct $\ell_1,\dots,\ell_{k-1}\in[r-1]$, a $(k-1)$-tuple $(v_{\ell_1},\dots,v_{\ell_{k-1}})\in \prod_{i\in[k-1]}V_{\ell_i}$ is called ``good'' if there are at least $|V_r|-\eps n$ vertices $v_r\in V_r$ such that $v_{\ell_1}\dots v_{\ell_{k-1}}v_r$ is an edge in $M$, otherwise it is ``bad''. The total number of bad tuples is at most $\eps n^{k-1}$, as otherwise the number of missing transversal edges in $M$ would be at least \[\eps n^{k-1}\cdot\eps n=\eps^2n^k>\eps_1n^k,\] a contradiction.
Thus, if we take a $(k-1)$-tuple $(v_{\ell_1},\dots,v_{\ell_{k-1}})\in \prod_{i\in[k-1]}V_{\ell_i}$ uniformly at random, the probability that $(v_{\ell_1},\dots,v_{\ell_{k-1}})$ is bad is at most
\[\mathbb{P}_{\text{bad}}\leq\frac{\eps n^{k-1}}{\prod_{i\in[k-1]}|V_{\ell_i}|}\leq\frac{\eps n^{k-1}}{(n/r-\eps_0n)^{k-1}}<r^{-2k}.\]
Now for each $i\in[r-1]$ we take a subset $V_i'\subseteq V_i$ of size $3$ uniformly at random. Then by the union bound
\[\mathbb{P}\left(\forall\,\mathcal{I}\in\binom{[r-1]}{k-1}:\text{ all the $(k-1)$-tuples in $\prod_{i\in\mathcal{I}}V_i'$ are good}\right)\geq 1- \binom{r-1}{k-1}3^{k-1}\mathbb{P}_{\text{bad}}>0.\]
Accordingly, there exists $V_i'\subseteq V_i$ of size $3$ for each $i\in[r-1]$, such that all $(k-1)$-tuples among them are good. Let $A_r\subseteq V_r$ be the common neighborhood of the $(k-1)$-tuples in $\bigcup_{\mathcal{I}\in\binom{[r-1]}{k-1}}\prod_{i\in\mathcal{I}}V_i'$. Then
\begin{align*}
|A_r|\geq |V_r|- \binom{r-1}{k-1}3^{k-1}\eps n\geq\frac{n}{r}-\eps_0n-(3r)^{k-1}\eps n>\frac{n}{r}-(3r)^k\eps n + \eps n.
\end{align*}
In particular, for any $\mathcal{I}\in\binom{[r-1]}{k-1}$, every $k$-tuple in $\left(\prod_{i\in\mathcal{I}}V_i'\right)\times A_r$ appears as an edge in $M$, i.e., a regular $(k-1)$-simplex. By~\Cref{witzig} we have that $\mathrm{Aff}(V_1'),\dots,\mathrm{Aff}(V_{r-1}'),\mathrm{Aff}(A_r)$ are pairwise orthogonal affine spaces of dimension at least $2$. Since \[\dim\mathrm{Aff}(V_1')+\dots+\dim\mathrm{Aff}(V_{r-1}')+\dim\mathrm{Aff}(A_r)\leq\dim\RR^{2r+1}=2r+1,\]
we have $\dim\mathrm{Aff}(A_r)\leq3$. Moreover, by~\Cref{witzig} the points of $A_r$ all lie on a sphere in $\mathrm{Aff}(A_r)$. 

To show that $A_r$ satisfies item (iii), we distinguish two cases. If $\dim\mathrm{Aff}(A_r)=2$, as the points in $A_r$ are non-collinear, any subset $A_r'\subseteq A_r$ of size at least $3$ satisfies $\dim\mathrm{Aff}(A_r')=2=\dim\mathrm{Aff}(A_r)$. If $\dim\mathrm{Aff}(A_r)=3$ and there is some $A_r'\subseteq A_r$ of size at least $|A_r|-\eps n$, such that $\dim\mathrm{Aff}(A_r')=2<\dim\mathrm{Aff}(A_r)$, then we can replace $A_r$ by $A_r'$ and return to the first case. Since $|A_r'|\geq|A_r|-\eps n\geq n/r-(3r)^k\eps n$, the new $A_r$ still satisfies items (i) and (ii). This completes the proof of~\Cref{lustig}.
\end{proof}

\begin{claim}
\label{sahne}
There exists at most one $i\in[r]$, such that $\dim\mathrm{Aff}(A_i)=3$.
\end{claim}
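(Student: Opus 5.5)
Suppose for contradiction that $\dim\mathrm{Aff}(A_i)=\dim\mathrm{Aff}(A_j)=3$ for two distinct indices, say $i=r-1$ and $j=r$. The plan is to find a family of large subsets of all $r$ parts that form complete $k$-partite configurations in $M$ for every choice of $k$ parts. Then \Cref{witzig} forces all $r$ affine hulls to be pairwise orthogonal, and a dimension count gives a contradiction.

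\textbf{Step 1: a clean subconfiguration for the small parts.} I would repeat the random-sampling argument of \Cref{lustig}, now with respect to the sets $A_1,\dots,A_r$ rather than $V_1,\dots,V_r$. Since $|A_\ell|\geq n/r-(3r)^k\eps n$, the number of missing transversal edges of $M$ among $A_1,\dots,A_r$ is still at most $\eps_1 n^k$. Consequently, for every $\mathcal{I}\in\binom{[r]}{k-1}$ all but at most $\eps n^{k-1}$ of the $(k-1)$-tuples in $\prod_{\ell\in\mathcal{I}}A_\ell$ are good, meaning each has all but $\eps n$ vertices of every other part $A_m$ ($m\notin\mathcal I$) in its common neighbourhood.

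For each $\ell\in[r-2]$, pick a random $3$-subset $V'_\ell\subseteq A_\ell$. For $A_{r-1}$ and $A_r$ I need more points, because I must preserve their $3$-dimensional affine hulls. So instead pick random $4$-subsets $V'_{r-1}\subseteq A_{r-1}$ and $V'_r\subseteq A_r$ conditioned on being affinely independent, i.e.\ spanning $\mathrm{Aff}(A_{r-1})$ and $\mathrm{Aff}(A_r)$ respectively.

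\textbf{Step 2: why affinely independent $4$-sets are plentiful.} This is where \Cref{lustig}(iii) enters. Suppose a uniformly random $4$-subset of $A_r$ were affinely dependent with probability close to $1$. Then, roughly, a large subset of $A_r$ would lie in a plane. More carefully, I would argue that no plane contains more than $|A_r|-\eps n$ points of $A_r$, since such a set would have a $2$-dimensional affine hull, contradicting (iii). Given this, for a random triple spanning a plane $P$ (most triples are non-collinear, because $A_r$ lies on a sphere and a line meets a sphere in at most $2$ points), the fourth point avoids $P$ with probability at least roughly $\eps r$. So the probability of affine independence is bounded below by a constant $c(\eps,r)>0$.

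The bad-tuple density can be made much smaller than this constant by taking $\eps_1\ll\eps$. A union bound over the $\binom{r}{k-1}$ index sets and the at most $4^{k-1}$ tuples each then shows that, with positive probability, all transversal $(k-1)$-tuples among $V'_1,\dots,V'_r$ are good.

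I expect the main obstacle to be this quantitative balancing: the probability of affine independence is only $\Omega(\eps)$, while the bad probability is $O(\eps_1/\eps)$-type. The hierarchy $\eps_1\ll\eps$ should handle it.

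\textbf{Step 3: the contradiction.} Fix $\mathcal{J}\in\binom{[r]}{k}$. For each $m\in\mathcal J$ and each good tuple from the other $k-1$ parts of $\mathcal J$, the point sets $V'_m$ lie in the tuple's neighbourhood, so every $k$-tuple in $\prod_{m\in\mathcal J}V'_m$ is an edge of $M$. Since $r\geq k$, every pair of indices lies in some such $\mathcal J$. Applying \Cref{witzig} to each $\mathcal J$ therefore shows that $\mathrm{Aff}(V'_1),\dots,\mathrm{Aff}(V'_r)$ are pairwise orthogonal, the first $r-2$ of dimension at least $2$ and the last two of dimension $3$. Hence
\[2(r-2)+3+3=2r+2>2r+1,\]
which contradicts the fact that pairwise orthogonal affine spaces in $\RR^{2r+1}$ have total dimension at most $2r+1$. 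Therefore at most one index has $\dim\mathrm{Aff}(A_i)=3$.
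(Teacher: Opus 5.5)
There is a genuine gap between Step 2 and Step 3. Your union bound in Step 2 only guarantees that every transversal $(k-1)$-tuple among $V'_1,\dots,V'_r$ is good, i.e.\ that its common neighbourhood misses at most $\eps n$ vertices of each other part $A_m$. It says nothing about whether the sampled points of $V'_m$ avoid those exceptional vertices. So the sentence ``the point sets $V'_m$ lie in the tuple's neighbourhood'' is unjustified. With it goes the conclusion that every $k$-tuple in $\prod_{m\in\mathcal J}V'_m$ is an edge of $M$, which is exactly the hypothesis \Cref{witzig} needs. In \Cref{lustig} and \Cref{core} this problem does not arise because the last part is not sampled: it is the full common neighbourhood of the sampled tuples. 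Sampling small sets in all $r$ parts loses that protection.

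The repair keeps your strategy. Drop goodness and union-bound directly over the required $k$-tuples. For fixed $\mathcal J$ and fixed positions inside the sampled sets, the resulting $k$-tuple is uniform on $\prod_{m\in\mathcal J}A_m$. It is therefore a missing edge with probability at most $\eps_1 n^k/\prod_{m\in\mathcal J}|A_m|\le (2r)^k\eps_1$, and there are at most $\binom{r}{k}4^k$ such tuples.

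Your Step 2 correctly shows that the independent events ``$V'_{r-1}$ is affinely independent'' and ``$V'_r$ is affinely independent'' each have probability at least about $\eps r/2$. This uses \Cref{lustig}(iii) and the fact that three distinct points on a sphere are never collinear. Since $\eps_1$ may be taken much smaller than $\eps^2$, with positive probability both $4$-sets are affinely independent and all required $k$-tuples are edges. Your Step 3 and the count $2(r-2)+3+3=2r+2>2r+1$ then finish the proof.

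Alternatively, imitate \Cref{core}. Sample only in $A_1,\dots,A_{r-1}$, using an affinely independent $4$-set in $A_{r-1}$, and take the full common neighbourhood in $A_r$. Its affine hull stays $3$-dimensional by \Cref{lustig}(iii).

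For comparison, the paper avoids sampling here altogether. It takes a non-orthogonal pair $\mathrm{Aff}(A_1),\mathrm{Aff}(A_2)$. It then uses \Cref{lustig}(iii) to find more than $\eps n$ points of $A_1$ whose projection onto $\mathrm{Aff}(A_2)$ is not the center of $C_2$. Finally it counts more than $\eps_1 n^k$ missing transversal edges directly.
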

\begin{proof}[Proof of~\Cref{sahne}]
Suppose towards a contradiction that at least two of $\mathrm{Aff}(A_1),\dots,\mathrm{Aff}(A_r)$ are $3$-dimensional. Then the affine spaces $\mathrm{Aff}(A_1),\dots,\mathrm{Aff}(A_{r})$ cannot be pairwise orthogonal, because otherwise
\[\dim\RR^{2r+1}\geq\sum_{i=1}^r\dim\mathrm{Aff}(A_i)\geq2r+2,\]
a contradiction. Assume without loss of generality that $\mathrm{Aff}(A_1)$ and $\mathrm{Aff}(A_2)$ are not orthogonal. There is a sphere $C_2$ given by~\Cref{lustig} item (ii), such that $A_2\subseteq C_2\subseteq\mathrm{Aff}(A_2)$. Let $A_1'\subseteq A_1$ be the set of points in $A_1$ whose projections onto $\mathrm{Aff}(A_2)$ are not the center of $C_2$. Since $\mathrm{Aff}(A_1)$ and $\mathrm{Aff}(A_2)$ are not orthogonal, we have $\dim\mathrm{Aff}(A_1\setminus A_1')<\dim\mathrm{Aff}(A_1)$. Then by~\Cref{lustig} item (iii), $|A_1\setminus A_1'|<|A_1|-\eps n$, namely, $|A_1'|>\eps n$. Moreover, every point $v_1\in A_1'$ is equidistant to at most $|A_2|-\eps n$ points in $A_2$, as otherwise the projection of $v_1$ onto $\mathrm{Aff}(A_2)$ would be the center of a sphere spanned by more than $|A_2|-\eps n$ points in $A_2$ and by~\Cref{lustig} item (iii) this sphere would be exactly $C_2$, a contradiction. Then, for each $v_1\in A_1'$ and each $(v_3,\dots,v_k)\in\prod_{i=3}^k V_i$, there are at least $\eps n$ vertices $v_2\in A_2\subseteq V_2$ such that $(v_1,\dots,v_k)$ is a missing transversal edge in $M$. Therefore, the number of missing transversal edges in $M$ is at least
\[|A_1'|\cdot\eps n\cdot\prod_{i=3}^{k}|V_i|\geq\eps n\cdot\eps n\cdot\left(\frac{n}{r}-\eps_0n\right)^{k-2}>\eps^2r^{-k}n^k>\eps_1n^k,\]
a contradiction.
\end{proof}

Assume without loss of generality $\dim\mathrm{Aff}(A_1)\leq\dim\mathrm{Aff}(A_2)\leq\dots\leq\dim\mathrm{Aff}(A_r)$.
In particular, by~\Cref{sahne} we have $\dim\mathrm{Aff}(A_1)=\dots=\dim\mathrm{Aff}(A_{r-1})=2$.

\begin{claim}
\label{core}
There exist subsets $A_i'\subseteq A_i$ for each $i\in[r]$, such that
\begin{enumerate}[(i)]
    \item $|A_1'|=\dots=|A_{r-1}'|=3$ and $|A_r'|>|A_r|-\eps n$;
    \item every $k$-tuple in $\bigcup_{\mathcal{I}\in\binom{[r-1]}{k-1}}A_r'\times\prod_{i\in\mathcal{I}}A_i'$ forms a regular $(k-1)$-simplex;
    \item $\mathrm{Aff}(A_i')=\mathrm{Aff}(A_i)$ holds for each $i\in[r]$.
\end{enumerate}
\end{claim}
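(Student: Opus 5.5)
The plan is to rerun the random-sampling argument from the proof of \Cref{lustig}, now working inside the sets $A_1,\dots,A_r$ rather than $V_1,\dots,V_r$, and to keep track of which common neighbourhood we obtain in $A_r$. For distinct $\ell_1,\dots,\ell_{k-1}\in[r-1]$, call a tuple $(v_{\ell_1},\dots,v_{\ell_{k-1}})\in\prod_{j}A_{\ell_j}$ \emph{good} if it forms an edge of $M$ with all but at most $\eps_0^{1/2}\eps n$ vertices of $A_r$. Since $M$ has at most $\eps_1 n^k$ missing transversal edges and $\eps_1\ll\eps_0\ll\eps$, the number of bad tuples is at most $\eps_1 n^k/(\eps_0^{1/2}\eps n)$, which is $o(\eps n^{k-1})$ at the relevant scale. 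Each $|A_i|\geq n/r-(3r)^k\eps n$, so a uniformly random tuple is bad with probability much smaller than $r^{-2k}$.

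For each $i\in[r-1]$ we then pick a random $3$-subset $A_i'\subseteq A_i$. The key difference from \Cref{lustig} is that we must also ensure $\mathrm{Aff}(A_i')=\mathrm{Aff}(A_i)$. Since $\dim\mathrm{Aff}(A_i)=2$, this holds exactly when the three chosen points are not collinear. The set $A_i$ lies on the sphere $C_i$, and a line meets a sphere in at most two points, so any three distinct points of $A_i$ are non-collinear. Hence every $3$-subset works automatically, which is the reason the claim is restricted to indices $i\leq r-1$.

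By the union bound, with positive probability all $(k-1)$-tuples in $\prod_{i\in\mathcal I}A_i'$, over all $\mathcal I\in\binom{[r-1]}{k-1}$, are good. Fix such a choice and let $A_r'$ be the common $M$-neighbourhood in $A_r$ of these at most $\binom{r-1}{k-1}3^{k-1}$ tuples. Then
\[|A_r'|\geq|A_r|-\binom{r-1}{k-1}3^{k-1}\eps_0^{1/2}\eps n>|A_r|-\eps n,\]
and every $k$-tuple in $A_r'\times\prod_{i\in\mathcal I}A_i'$ is an edge of $M$, i.e.\ a regular simplex. This gives (i) and (ii). For (iii) at $i=r$, we apply \Cref{lustig}(iii): since $|A_r'|\geq|A_r|-\eps n$, we get $\mathrm{Aff}(A_r')=\mathrm{Aff}(A_r)$. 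This is why the threshold in the definition of good must be strictly smaller than $\eps n$ divided by the number of tuples, rather than equal to $\eps n$.

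The main obstacle is the bookkeeping of the parameters. The goodness threshold has to be small enough that the union over $\binom{r-1}{k-1}3^{k-1}$ tuples still loses fewer than $\eps n$ vertices. At the same time, the count of bad tuples must stay small relative to $\prod|A_{\ell_j}|\approx(n/r)^{k-1}$. Both requirements follow from $\eps_1\ll\eps_0\ll\eps$, for instance by using the threshold $\eps n/(3r)^k$ and bounding the number of bad tuples by $\eps_1(3r)^k n^{k-1}/\eps$.
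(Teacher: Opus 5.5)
Your proposal is correct and follows the paper's proof essentially step for step. The paper also reruns the sampling argument of \Cref{lustig} inside $A_1,\dots,A_r$ with the sharper threshold $\eps'=\eps/(3r)^k$ and takes $A_r'$ to be the common neighbourhood. It then gets (iii) from two facts: for $i\le r-1$, three points of the spherical set $A_i$ with $\dim\mathrm{Aff}(A_i)=2$ are non-collinear, and for $i=r$ it applies \Cref{lustig}(iii).
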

\begin{proof}[Proof of~\Cref{core}]
We repeat the proof of~\Cref{lustig} on $A_1,\dots,A_r$ with $\eps'=\eps/(3r)^k$ in place of $\eps$, which yields that with positive probability there exists $A_i'\subseteq A_i$ of size $3$ for each $i\in[r-1]$, such that every $(k-1)$-tuple in $\bigcup_{\mathcal{I}\in\binom{[r-1]}{k-1}}\prod_{i\in\mathcal{I}}A_i'$ forms edges with at least $|A_r|-\eps'n$ vertices in $A_r$. Let $A_r'\subseteq A_r$ be the common neighborhood of $\bigcup_{\mathcal{I}\in\binom{[r-1]}{k-1}}\prod_{i\in\mathcal{I}}A_i'$. Note that $|A_r'|>|A_r|-(3r)^k\eps'n=|A_r|-\eps n$. It is immediate that the chosen $A_1',\dots,A_r'$ satisfy items (i) and (ii).

To show that $A_1',\dots,A_r'$ satisfy item (iii), recall that $\dim\mathrm{Aff}(A_1)=\dots=\dim\mathrm{Aff}(A_{r-1})=2$ and $A_1,\dots,A_{r-1}$ are spherical. Namely, we have $\dim\mathrm{Aff}(A_i')=2=\dim\mathrm{Aff}(A_i)$ for all $i\in[r-1]$, which implies that $\mathrm{Aff}(A_i')=\mathrm{Aff}(A_i)$ for all $i\in[r-1]$. Moreover, since $|A_r'|>|A_r|-\eps n$, by~\Cref{lustig} item (iii) we have $\mathrm{Aff}(A_r')=\mathrm{Aff}(A_r)$.
\end{proof}

\begin{claim}
\label{creme}
$C_1,\dots,C_r$ are pairwise orthogonal spheres.
\end{claim}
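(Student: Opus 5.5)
We prove that $\mathrm{Aff}(A_i)$ and $\mathrm{Aff}(A_j)$ are orthogonal for every pair $i\neq j$. Since $C_i\subseteq\mathrm{Aff}(A_i)$ and $C_i$ contains $A_i$, we have $\mathrm{Aff}(C_i)=\mathrm{Aff}(A_i)$, and the claim follows. The tool is \Cref{core} combined with \Cref{witzig}: every orthogonality relation we need comes from a product of sets of size at least $3$ all of whose transversal $k$-tuples are regular simplices. Since $k\le r$, any two indices $i\neq j\in[r]$ can be placed together in some $k$-subset of $[r]$.

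\textbf{Case $i,j\in[r-1]$.} Choose $\mathcal{I}\in\binom{[r-1]}{k-1}$ containing $i$ and $j$; this is possible because $k-1\ge2$ and $r-1\ge k-1$. By \Cref{core} item (ii), every $k$-tuple in $A_r'\times\prod_{\ell\in\mathcal{I}}A_\ell'$ is a regular $(k-1)$-simplex. All these sets have size at least $3$, since $|A_r'|>|A_r|-\eps n$ is large. Hence \Cref{witzig} shows that $\mathrm{Aff}(A_i')$ and $\mathrm{Aff}(A_j')$ are orthogonal. By \Cref{core} item (iii), these affine spaces equal $\mathrm{Aff}(A_i)$ and $\mathrm{Aff}(A_j)$.

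\textbf{Case one index equals $r$.} Take $i\in[r-1]$ and $j=r$, and choose $\mathcal{I}\ni i$ of size $k-1$ in $[r-1]$. The same argument gives that $\mathrm{Aff}(A_i')=\mathrm{Aff}(A_i)$ is orthogonal to $\mathrm{Aff}(A_r')=\mathrm{Aff}(A_r)$.

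\textbf{Spheres.} The remaining step is to identify the $C_i$ as spheres whose affine hulls are exactly $\mathrm{Aff}(A_i)$. This holds because $C_i\subseteq\mathrm{Aff}(A_i)$ and $A_i\subseteq C_i$, so $\mathrm{Aff}(C_i)=\mathrm{Aff}(A_i)$, which is exactly the notion of orthogonality of spheres. The only subtle point is the passage from the small sets $A_i'$ to $A_i$, and this is precisely what \Cref{core} item (iii) provides. No step looks like a real obstacle; the main care needed is checking that the index set $\mathcal{I}$ can always be chosen, which uses $r\ge k\ge3$.
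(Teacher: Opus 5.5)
Your proof is correct and follows the paper's argument. You apply \Cref{witzig} to the products in \Cref{core} item (ii) to get pairwise orthogonality of $\mathrm{Aff}(A_1'),\dots,\mathrm{Aff}(A_r')$, transfer this to $\mathrm{Aff}(A_i)$ via \Cref{core} item (iii), and make explicit two points the paper leaves implicit: that some $\mathcal{I}\in\binom{[r-1]}{k-1}$ covers any required pair of indices (using $r\geq k\geq3$), and that $A_i\subseteq C_i\subseteq\mathrm{Aff}(A_i)$ forces $\mathrm{Aff}(C_i)=\mathrm{Aff}(A_i)$.
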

\begin{proof}[Proof of~\Cref{creme}]
By~\Cref{core} item (ii) and~\Cref{witzig}, we have that $\mathrm{Aff}(A_1'),\dots,\mathrm{Aff}(A_r')$ are pairwise orthogonal. It then follows from~\Cref{core} item (iii) that $\mathrm{Aff}(A_1),\dots,\mathrm{Aff}(A_r)$ are pairwise orthogonal, namely, $C_1,\dots,C_r$ are pairwise orthogonal spheres.  
\end{proof}

\begin{claim}
\label{spheres}
Any two points lying on different spheres among $C_1,\dots,C_r$ are at the same distance.
\end{claim}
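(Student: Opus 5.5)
Our plan is to combine~\Cref{core} with~\Cref{spheres_distance_center}(i), applied to each $k$-subfamily of the spheres, and then use $k\geq3$ to pass from $k$-subfamilies to arbitrary pairs of spheres.

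\emph{Step 1: transfer the simplex property to the spheres.} Fix $\mathcal{I}\in\binom{[r-1]}{k-1}$ and consider the $k$ spheres $\{C_i\}_{i\in\mathcal{I}}\cup\{C_r\}$. They are pairwise orthogonal by~\Cref{creme}. By~\Cref{core} item (iii) and~\Cref{lustig} item (ii), the sets $A_i'\subseteq C_i$ satisfy $\mathrm{Aff}(A_i')=\mathrm{Aff}(A_i)$. We also need $\mathrm{Aff}(C_i)=\mathrm{Aff}(A_i)$; this holds because $A_i\subseteq C_i\subseteq\mathrm{Aff}(A_i)$. By~\Cref{core} item (ii), every $k$-tuple in $A_r'\times\prod_{i\in\mathcal{I}}A_i'$ is a regular simplex. \Cref{spheres_distance_center}(i) then shows that every $k$-tuple in $C_r\times\prod_{i\in\mathcal{I}}C_i$ forms a regular $(k-1)$-simplex.

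\emph{Step 2: the distance between two fixed spheres is constant.} Let $d_{ij}$ denote the distance between a point of $C_i$ and a point of $C_j$. For two spheres $C_i,C_j$ with $i,j\in\mathcal{I}\cup\{r\}$, fix all the other coordinates of the tuple. Every pair $(x,y)\in C_i\times C_j$ then has $\|x-y\|$ equal to the common edge length of that simplex. That edge length also equals $\|x-z\|$ for a fixed $z$ in a third sphere, which exists since $k\geq3$. Varying $y$ while $x$ stays fixed, and then varying $x$, shows that $d_{ij}$ is well defined. Moreover, within one $k$-family all pairwise distances coincide.

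\emph{Step 3: compare different pairs.} For two arbitrary pairs $\{i,j\},\{i',j'\}\subseteq[r]$, we chain through $k$-families. Any two indices in $[r]$ lie together in some $\mathcal{I}\cup\{r\}$ with $|\mathcal{I}|=k-1$, because $r-1\geq k-1$: if neither index is $r$, we need $k-1\geq2$, which holds since $k\geq3$. Thus all $d_{ij}$ with $i,j$ in the same family are equal. Two families sharing a pair of indices, for instance both containing $r$ and some common $i$, have the same value $d_{ir}$. Every family contains $r$, and any two families can be connected by swapping one index of $\mathcal{I}$ at a time; consecutive families then share at least the pair $\{r,i\}$ for a common $i$, which uses $k-1\geq2$. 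Hence a single common distance exists.

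The main subtlety is Step 1. \Cref{spheres_distance_center}(i) requires $\mathrm{Aff}(A_i')=\mathrm{Aff}(C_i)$, and we must check that the sphere $C_i$ of~\Cref{lustig} really spans $\mathrm{Aff}(A_i)$, which is immediate from the sandwich $A_i\subseteq C_i\subseteq\mathrm{Aff}(A_i)$. Beyond this, the chaining in Step 3 is routine and relies only on $r\geq k\geq3$.
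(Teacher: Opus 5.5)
Your proposal is correct and follows the paper's proof. Both combine \Cref{core} items (ii)--(iii), together with $A_i\subseteq C_i\subseteq\mathrm{Aff}(A_i)$, and \Cref{spheres_distance_center}(i) to show that every $k$-tuple in $C_r\times\prod_{i\in\mathcal{I}}C_i$ is a regular simplex for each $\mathcal{I}\in\binom{[r-1]}{k-1}$. Your Steps 2 and 3 spell out the chaining argument, which uses $k\geq3$ and $r\geq k$, that the paper compresses into the sentence ``this further implies that all the inter-sphere distances among $C_1,\dots,C_r$ are equal.''
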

\begin{proof}[Proof of~\Cref{spheres}]
From~\Cref{core} item (iii) it follows that $\mathrm{Aff}(A_i')=\mathrm{Aff}(C_i)$ for every $i\in[r]$. Moreover, for any $\mathcal{I}\in\binom{[r-1]}{k-1}$, by~\Cref{core} item (ii) we have that every $k$-tuple in $A_r'\times\prod_{i\in\mathcal{I}}A_i'$ forms a regular $(k-1)$-simplex. Then by~\Cref{spheres_distance_center} item (i), every $k$-tuple in $C_r\times\prod_{i\in\mathcal{I}}C_i$ forms a regular $(k-1)$-simplex. This further implies that all the inter-sphere distances among $C_1,\dots,C_r$ are equal.
\end{proof}

Our next goal is to obtain information about the centers of $C_1,\dots,C_r$. We shall distinguish two cases depending on the value of $\sum_{i\in[r]}\dim\mathrm{Aff}(C_i)$.
\begin{claim}
\label{good_centers}
If $\sum_{i\in[r]}\dim\mathrm{Aff}(C_i)=2r+1$, then $C_1,\dots,C_r$ all have the same center.
\end{claim}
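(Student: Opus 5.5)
The plan is to reduce directly to \Cref{spheres_distance_center} item (ii), applied to a family of exactly $k$ spheres. The obstacle is that the lemma concerns $k$ spheres whose dimensions sum to $d$, whereas here we have $r\geq k$ spheres. I would therefore work inside a suitable affine subspace.

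First I fix the setup. By \Cref{creme} the spheres $C_1,\dots,C_r$ are pairwise orthogonal. By \Cref{spheres}, any two points on different spheres are at a common distance $\lambda$. Hence for every $\mathcal{J}\in\binom{[r]}{k}$, every $k$-tuple in $\prod_{i\in\mathcal{J}}C_i$ forms a regular $(k-1)$-simplex. Since the dimensions of the pairwise orthogonal associated linear spaces sum to $2r+1$, we get the orthogonal decomposition
\[\RR^{2r+1}=\mathcal{S}_1\oplus\dots\oplus\mathcal{S}_r,\]
where $\mathcal{S}_i$ is the linear space associated with $\mathrm{Aff}(C_i)$.

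Next I would show directly that the centers agree; this avoids fussing with ambient subspaces. Let $c_i$ be the center and $\gamma_i$ the radius of $C_i$. Write $c_j-c_i=\sum_\ell w_\ell$ with $w_\ell\in\mathcal{S}_\ell$. For $x=c_i+u\in C_i$ and $y=c_j+v\in C_j$, where $u\in\mathcal{S}_i$, $v\in\mathcal{S}_j$ and $|u|=\gamma_i$, $|v|=\gamma_j$, we have
\[\|y-x\|^2=\|c_j-c_i\|^2+\gamma_i^2+\gamma_j^2+2\langle c_j-c_i,v\rangle-2\langle c_j-c_i,u\rangle.\]
This quantity must be constant as $u$ ranges over a sphere in $\mathcal{S}_i$ of dimension at least $1$, which forces $\langle c_j-c_i,u\rangle$ to be constant. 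A linear functional that is constant on a centered sphere of positive radius in a space of dimension at least $2$ must vanish, so $w_i=0$; symmetrically, $w_j=0$.

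To finish, I need $w_\ell=0$ for every other index $\ell\neq i,j$ as well. For this I use a third sphere $C_\ell$ with $z=c_\ell+t\in C_\ell$. The same argument applied to the pairs $(i,\ell)$ and $(j,\ell)$ shows that the $\mathcal{S}_\ell$-components of $c_\ell-c_i$ and of $c_\ell-c_j$ both vanish. Subtracting, the $\mathcal{S}_\ell$-component of $c_j-c_i$ is zero, that is, $w_\ell=0$. This uses $r\geq3$, which holds. Consequently $c_j-c_i$ has zero component in every $\mathcal{S}_\ell$, and since these spaces span $\RR^{2r+1}$ it follows that $c_i=c_j$.

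The step that genuinely uses the hypothesis $\sum_i\dim\mathrm{Aff}(C_i)=2r+1$ is this final spanning step. Without it, $c_j-c_i$ could have a component orthogonal to all the $\mathcal{S}_\ell$, which is exactly the skewed configuration (1')--(2'). I expect the main point needing care to be justifying that $\langle c_j-c_i,\cdot\rangle$ vanishes on $\mathcal{S}_i$. This holds because each $C_i$ spans $\mathrm{Aff}(C_i)$, so the sphere is centered in a space of dimension at least $2$ and contains both $u$ and $-u$.
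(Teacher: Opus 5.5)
Your proof is correct, but it takes a different route from the paper. The paper's proof is a one-line citation. By \Cref{spheres} all inter-sphere distances coincide, so every $r$-tuple in $\prod_{i\in[r]}C_i$ is a regular $(r-1)$-simplex. Then \Cref{spheres_distance_center}~(ii) is applied with $r\geq 3$ playing the role of $k$ and $d=2r+1$. So the obstacle you flag at the start is not real: that lemma is stated for any number $\geq 3$ of spheres, not specifically for the $k$ of the theorem.

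You instead reprove the content of that lemma by hand. Expanding $\|y-x\|^2$ and using the antipodal symmetry $u\mapsto -u$ of the full sphere $C_i$ in $\mathrm{Aff}(C_i)$ shows that the projection of $c_i$ onto $\mathrm{Aff}(C_j)$ is $c_j$ for all $i\neq j$. The decomposition $\RR^{2r+1}=\mathcal{S}_1\oplus\dots\oplus\mathcal{S}_r$ then forces $c_i=c_j$.

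The paper's route is shorter but depends on the imported lemma. Yours is self-contained and needs only that the distance between $C_i$ and $C_j$ is constant for each pair separately, not that these constants agree across pairs. It also shows that the dimension hypothesis enters only in the final spanning step. The projection fact you derive is the same one the paper uses in the proof of \Cref{bad_centers}.

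A minor point: the remark that the third-sphere step ``uses $r\geq3$'' is vacuous. If there were no third index, $w_i$ and $w_j$ would already exhaust the decomposition.
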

\begin{proof}[Proof of~\Cref{good_centers}]
The claim follows by combining~\Cref{spheres} and~\Cref{spheres_distance_center} item (ii).
\end{proof}

\begin{claim}
\label{bad_centers}
If $\sum_{i\in[r]}\dim\mathrm{Aff}(C_i)=2r$ and $r\geq4$, then at least $r-1$ among $C_1,\dots,C_r$ have the same center.
\end{claim}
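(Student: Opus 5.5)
Throughout, assume $\sum_{i\in[r]}\dim\mathrm{Aff}(C_i)=2r$. By~\Cref{lustig} item (i) each $\dim\mathrm{Aff}(C_i)\geq 2$, so every $C_i$ is a circle. The plan is to write everything in coordinates adapted to the orthogonal planes. Then \Cref{spheres} turns into a system of equations on the centers, and a short algebraic argument finishes.

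\emph{Coordinates.} For each $i\in[r]$ let $P_i$ be the $2$-dimensional linear space associated with $\mathrm{Aff}(C_i)$, and let $c_i$ and $\rho_i$ be the center and radius of $C_i$. By~\Cref{creme} the $P_i$ are pairwise orthogonal. They span a $2r$-dimensional subspace, so there is a unit vector $e$ orthogonal to all of them, and $\RR^{2r+1}=P_1\oplus\dots\oplus P_r\oplus\RR e$. Decompose
\[c_i=\sum_{l\in[r]}p_{i,l}+t_ie,\qquad p_{i,l}\in P_l.\]
A point of $C_i$ has the form $c_i+u_i$ with $u_i\in P_i$ and $\|u_i\|=\rho_i$. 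For $i\neq j$ we have $u_i\perp u_j$, and hence
\[\|c_i+u_i-c_j-u_j\|^2=\|c_i-c_j\|^2+\rho_i^2+\rho_j^2+2\langle c_i-c_j,u_i\rangle-2\langle c_i-c_j,u_j\rangle .\]

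\emph{Step 1: all centers lie on a line parallel to $e$.} By~\Cref{spheres}, the expression above equals a constant $s^2$ for all choices of $u_i,u_j$ on the circles. Letting $u_i$ range over the whole circle in $P_i$ forces $c_i-c_j\perp P_i$, and similarly $c_i-c_j\perp P_j$. In coordinates this reads $p_{i,i}=p_{j,i}$ and $p_{i,j}=p_{j,j}$ for all $i\neq j$. Hence, for each fixed $l$, every $p_{j,l}$ equals $p_{l,l}$, so the $P_l$-component of $c_i$ does not depend on $i$. Therefore $c_i-c_j=(t_i-t_j)e$, and the equal-distance condition becomes
\[(t_i-t_j)^2+\rho_i^2+\rho_j^2=s^2\quad\text{for all distinct }i,j\in[r].\]

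\emph{Step 2: an identity for any four indices.} This step uses $r\geq4$. For four distinct indices $i,j,l,m$, add the equations for the pairs $\{i,j\},\{l,m\}$ and subtract those for $\{i,l\},\{j,m\}$. The radii cancel and we obtain
\[(t_i-t_j)^2+(t_l-t_m)^2=(t_i-t_l)^2+(t_j-t_m)^2 .\]
Expanding, this simplifies to $(t_i-t_m)(t_j-t_l)=0$. Since $i,j,l,m$ are arbitrary distinct indices, the same identity holds for every relabelling of them.

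\emph{Step 3: all but at most one $t_i$ coincide.} Suppose three indices $i,j,l$ have pairwise distinct values $t_i,t_j,t_l$, and take any fourth index $m$. The identity $(t_i-t_m)(t_j-t_l)=0$ gives $t_m=t_i$, while the relabelled identity $(t_j-t_m)(t_i-t_l)=0$ gives $t_m=t_j$. This is a contradiction, so the $t_i$ take at most two values. Now suppose two values $\alpha\neq\beta$ are each attained twice, say $t_i=t_j=\alpha$ and $t_l=t_m=\beta$. Then the identity $(t_i-t_l)(t_j-t_m)=0$ gives $(\alpha-\beta)^2=0$, again a contradiction. Hence at least $r-1$ of the $t_i$ are equal. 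Combined with Step 1, the corresponding circles have the same center.

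The only genuinely new point is Step 1, namely extracting from the rotational freedom on each circle that the center differences are orthogonal to both planes involved. Once that is established, the rest is elementary algebra. The hypothesis $r\geq4$ is needed precisely because Step 2 requires four distinct indices; for $r=3$ the skewed configuration in the introduction shows the conclusion genuinely fails.
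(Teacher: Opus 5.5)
Your proof is correct. Step 1 is the same reduction the paper makes. The paper only asserts that the projection of $c_i$ onto $\mathrm{Aff}(C_j)$ is $c_j$, so that the centers agree in their first $2r$ coordinates, and your $u_i\mapsto -u_i$ argument supplies the justification.

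Your finish, however, takes a different route. The paper does a case analysis on the number of distinct last coordinates:
\begin{enumerate}
    \item With at least four values, it orders $a_1>a_2>a_3>a_4$ and derives the contradictory inequalities $\gamma_2>\gamma_3$ and $\gamma_2<\gamma_3$.
    \item With exactly three values, it uses a fourth circle sharing a center with one of the three. This gives $\gamma_i=\gamma_4$ and then $(a_j-a_\ell)^2=(a_i-a_j)^2+(a_i-a_\ell)^2$.
    \item The $2+2$ split is excluded again by opposite radius inequalities.
\end{enumerate}

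You instead take the signed combination of four distance equations in which the radii cancel. This gives the single identity $(t_i-t_m)(t_j-t_l)=0$ for every quadruple of distinct indices, and all cases then follow in a few lines without ordering anything.

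Each of the paper's cases is in effect an instance of your identity, read through radius inequalities. For example, adding the paper's two equations in the $2+2$ case gives exactly your $(t_i-t_l)(t_j-t_m)=(\alpha-\beta)^2=0$.

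So your version is shorter and uniform, and it makes transparent that $r\geq4$ enters only through the need for four distinct indices. The paper's version stays closer to explicit distance computations with the radii, but it needs three separate cases to reach the same conclusion.
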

\begin{proof}[Proof of~\Cref{bad_centers}]
Let $c_1,\dots,c_r$ denote the centers of $C_1,\dots,C_r$, respectively. The assumption $\sum_{i\in[r]}\dim\mathrm{Aff}(C_i)=2r$ implies that $\dim\mathrm{Aff}(C_i)=2$ for all $i\in[r]$.

Since $\mathrm{Aff}(C_1),\dots,\mathrm{Aff}(C_r)$ are pairwise orthogonal by~\Cref{creme} and their dimensions sum to $2r$, we can equip $\RR^{2r+1}$ with an orthonormal basis given by the union of $B_1,\dots,B_r$, where each $B_i$ is an orthonormal basis of the linear subspace $\mathrm{Aff}(C_i)-c_i$, together with one additional basis vector. Under this basis of $\RR^{2r+1}$, we can assume that
\[\mathrm{Aff}(C_i)=c_i+\underbrace{\{0\}\times\dots\times\{0\}}_{2(i-1)\text{ times}}\times\RR^{2}\times\underbrace{\{0\}\times\dots\times\{0\}}_{2r+1-2i\text{ times}}\]
for each $i\in[r]$. Moreover, since $C_1,\dots,C_r$ are pairwise orthogonal by~\Cref{creme} and their inter-sphere distances are all equal by~\Cref{spheres}, we have that, for any distinct $i,j\in[r]$, the projection of $c_i$ onto $\mathrm{Aff}(C_j)$ is $c_j$, i.e., $c_i$ has the same $(2j-1)$-th and $(2j)$-th coordinates as $c_j$. Consequently, the centers $c_1,\dots,c_r$ of $C_1,\dots,C_r$ coincide in their first $2r$ coordinates.

Let $\gamma_i$ denote the radius of each $C_i$.

Suppose that the last coordinates of $c_1,\dots,c_r$ take more than $3$ values. Assume without loss of generality that the last coordinates of $c_1,\dots,c_4$ are $a_1,\dots,a_4$, respectively, where $a_1>a_2>a_3>a_4$. For any $x_1\in C_1,\dots,x_4\in C_4$, by~\Cref{spheres} we have
\[\gamma_1^2+(a_1-a_2)^2+\gamma_2^2=||x_1-x_2||^2=||x_1-x_3||^2=\gamma_1^2+(a_1-a_3)^2+\gamma_3^2,\]
which implies that $\gamma_2>\gamma_3$. On the other hand,
\[\gamma_2^2+(a_2-a_4)^2+\gamma_4^2=||x_2-x_4||^2=||x_3-x_4||^2=\gamma_3^2+(a_3-a_4)^2+\gamma_4^2,\]
yielding that $\gamma_2<\gamma_3$, a contradiction.

Next, suppose there are $3$ possible values for the last coordinates. We again assume the last coordinates of $c_1,c_2,c_3$ are $a_1,a_2,a_3$, respectively, where $a_1>a_2>a_3$. Since $r\geq4$, we have $c_4=c_i$ for some $i\in[3]$. Let $\{j,\ell\}=[3]\setminus\{i\}$ and $x_1\in C_1,\dots,x_4\in C_4$. By~\Cref{spheres} we have
\[\gamma_i^2+(a_i-a_j)^2+\gamma_j^2=||x_i-x_j||^2=||x_4-x_j||^2=\gamma_4^2+(a_i-a_j)^2+\gamma_j^2,\]
namely, $\gamma_i=\gamma_4$. Then, as $2\gamma_i^2=||x_i-x_4||^2=||x_i-x_j||^2$, we obtain $\gamma_i^2=(a_i-a_j)^2+\gamma_j^2$. Similarly, we obtain $\gamma_i^2=(a_i-a_\ell)^2+\gamma_\ell^2$ and thus
\[\gamma_j^2+(a_j-a_\ell)^2+\gamma_\ell^2=||x_j-x_\ell||^2=||x_i-x_4||^2=2\gamma_i^2=\gamma_j^2+(a_i-a_j)^2+(a_i-a_\ell)^2+\gamma_\ell^2,\]
implying that $(a_j-a_\ell)^2=(a_i-a_j)^2+(a_i-a_\ell)^2$, a contradiction.

Hence, there are at most $2$ possible values for the last coordinates, say $a_1$ and $a_2$ with $a_1>a_2$. It remains to exclude the case where two centers, say $c_1$ and $c_2$, have last coordinate $a_1$ and two centers, say $c_3$ and $c_4$, have last coordinate $a_2$. In this case, we have
\[\gamma_1^2+\gamma_2^2=||x_1-x_2||^2=||x_1-x_3||^2=\gamma_1^2+(a_1-a_2)^2+\gamma_3^2,\]
giving that $\gamma_2>\gamma_3$. Meanwhile,
\[\gamma_3^2+\gamma_4^2=||x_3-x_4||^2=||x_2-x_4||^2=\gamma_2^2+(a_1-a_2)^2+\gamma_4^2\]
gives that $\gamma_2<\gamma_3$, a contradiction. This completes the proof of~\Cref{bad_centers}.
\end{proof}

We proceed with the proof of~\Cref{simplex_stability}. Let $A_0$ be the set of points that are not in $\bigcup_{i\in[r]}A_i$. In summary, we have shown that one can find a partition $X=A_0\dot\cup A_1\dot\cup\dots\dot\cup A_r$ satisfying $|A_i|>n/r-(3r)^k\eps n$ for every $i\in[r]$ (see~\Cref{lustig}) and thus $|A_0|<r(3r)^k\eps n$, and pairwise orthogonal spheres $C_1,\dots,C_r\subseteq\RR^{2r+1}$ such that $A_i\subseteq C_i$ for all $i\in[r]$ (see~\Cref{lustig,creme}). Carrying out the argument with $\frac{\eps}{r(3r)^k}$ in place of $\eps$ from the beginning, we obtain $|A_0|<\eps n$ and $|A_i|>n/r-\eps n$ for each $i\in[r]$. It remains to check that $C_1,\dots,C_r$ satisfy items (1) and (2), or, in the case $r=3$, either items (1) and (2) or items (1') and (2').

Suppose first that $\sum_{i\in[r]}\dim\mathrm{Aff}(C_i)=2r+1$. Then we have $\dim\mathrm{Aff}(C_i)=2$ for each $i\in[r-1]$ and $\dim\mathrm{Aff}(C_r)=3$, so item (1) is fulfilled. Moreover, by~\Cref{good_centers} we know that $C_1,\dots,C_r$ share the same center, which together with~\Cref{spheres} implies that they have the same radius, thereby verifying item (2).

Suppose next that $\sum_{i\in[r]}\dim\mathrm{Aff}(C_i)=2r$ and at least $r-1$ among $C_1,\dots,C_r$ have the same center. In this case we have $\dim\mathrm{Aff}(C_i)=2$ for all $i\in[r]$. Since $C_1,\dots,C_r$ are all circles, we may, after relabeling if necessary, assume that $C_1,\dots,C_{r-1}$ share the same center denoted by $c$. Then it follows by~\Cref{spheres} that $C_1,\dots,C_{r-1}$ have the same radius. Furthermore, from~\Cref{spheres} one can deduce that the distance between $c$ and any point on $C_r$ is equal to this common radius. Recall $\sum_{i\in[r-1]}\dim\mathrm{Aff}(C_i)=2(r-1)$ and as demonstrated in the proof of~\Cref{bad_centers} we can assume that
\[\mathrm{Aff}(C_i)=c+\underbrace{\{0\}\times\dots\times\{0\}}_{2(i-1)\text{ times}}\times\RR^2\times\underbrace{\{0\}\times\dots\times\{0\}}_{2r+1-2i\text{ times}}\]
for each $i\in[r-1]$, and $\mathrm{Aff}(C_r)=c_r+\{0\}\times\dots\times\{0\}\times\RR^2\times\{0\}$, where $c_r$ denotes the center of $C_r$ which differs from $c$ only in the last coordinate. Let $\Gamma$ be the $2$-dimensional sphere in $c+\{0\}\times\dots\times\{0\}\times\RR^3$ centered at $c$ with the same radius as $C_1,\dots,C_{r-1}$. In particular, $\Gamma$ is orthogonal to $C_1,\dots,C_{r-1}$ and contains $C_r$. Therefore, in this case we can replace $C_r$ with $\Gamma$ (after which we relabel $\Gamma$ as $C_r$), and the resulting $C_1,\dots,C_r$ satisfy items (1) and (2). 

Lastly, suppose that $\sum_{i\in[r]}\dim\mathrm{Aff}(C_i)=2r$ and at most $r-2$ among $C_1,\dots,C_r$ have the same center. By~\Cref{bad_centers} this case only occurs when $r=3$. So we have three circles $C_1,C_2,C_3$ with pairwise distinct centers. Since the inter-circle distances are all equal by~\Cref{spheres}, we can conclude that $C_1,C_2,C_3$ satisfy items (1') and (2') in~\Cref{simplex_stability}. This completes the proof of~\Cref{simplex_stability}.
\end{proof}

\section{The number of regular $(k-1)$-simplices}
\label{sec:number}
In this section we prove our extremal result, namely,~\Cref{simplex_odd}. The following observation shall be used repeatedly in the proof.
\begin{lemma}
\label{obs}
Let $v\in\RR^d$, $d\geq3$. If $\Gamma\subseteq\RR^d$ is a $2$-dimensional sphere, then among any $n$ points on $\Gamma$, there are at most $O(n^{4/3})$ pairs of points that can form an equilateral triangle with $v$. If $\Gamma$ is a circle, then the number of such pairs is at most $n$.
\end{lemma}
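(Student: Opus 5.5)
The plan is to turn the condition ``$\{v,x,y\}$ is an equilateral triangle'' into a unit-distance-type condition on a sphere in $\RR^3$. Then the $2$-dimensional case follows from the known $O(n^{4/3})$ bound for unit distances on a $2$-sphere~\cite{CEGSW90,EHP89}, and the circle case follows from an elementary argument.

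\emph{Reduction.} Let $P\subseteq\Gamma$ be the $n$ given points, let $c$ be the center and $\gamma$ the radius of $\Gamma$, and let $\mathcal{A}=\mathrm{Aff}(\Gamma)$. Write $v=p+w$, where $p$ is the orthogonal projection of $v$ onto $\mathcal{A}$ and $w\perp\mathcal{A}-c$. For $x\in\Gamma$ we have
\[\|x-v\|^2=\|x-p\|^2+\|w\|^2=\gamma^2+\|c-p\|^2-2\langle x-c,\,p-c\rangle+\|w\|^2,\]
so $\|x-v\|$ is constant exactly on the intersection of $\Gamma$ with a hyperplane $\{x:\langle x-c,p-c\rangle=\text{const}\}$ of $\mathcal{A}$. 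If the pair $x,y\in P$ forms an equilateral triangle with $v$, then $\|x-v\|=\|y-v\|=\|x-y\|=:\lambda$. Hence $x$ and $y$ both lie on the circle $\Sigma_\lambda=\Gamma\cap\{\|\cdot-v\|=\lambda\}$, or on all of $\Gamma$ if $p=c$, and moreover $\|x-y\|=\lambda$. The point is that this fixes the distance once the level set is fixed. I would split into the two cases $p=c$ and $p\ne c$.

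\emph{Case $p\neq c$.} The level sets $\Sigma_\lambda$ are disjoint circles (or points) on $\Gamma$, obtained by slicing $\Gamma$ with the parallel planes orthogonal to $p-c$. A circle of radius $\rho$ contains at most $m$ pairs at a prescribed distance $\lambda$ among $m$ of its points, since each point has at most two partners on the circle at that distance, giving at most $2m/2=m$ pairs. Both $x$ and $y$ lie on the same $\Sigma_\lambda$, so summing over the slices gives at most $\sum_\lambda|P\cap\Sigma_\lambda|\le n$ pairs. This already settles this case with the linear bound.

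\emph{Case $p=c$.} Here every point of $\Gamma$ is at the same distance $\lambda_0=\sqrt{\gamma^2+\|w\|^2}$ from $v$. The qualifying pairs are therefore exactly the pairs of $P$ at distance $\lambda_0$, that is, unit-distance pairs after rescaling, among $n$ points on a $2$-sphere. The bound $O(n^{4/3})$ for these is the known spherical unit-distance theorem~\cite{CEGSW90,EHP89}, which applies to a sphere of arbitrary radius; I would cite it as a black box. When $\Gamma$ is a circle instead, the same ``at most two partners per point'' argument gives at most $n$ pairs. The circle statement also covers the $p\ne c$ case above, since a circle sliced by lines gives at most two points per slice, which again yields at most $n$ pairs.

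The main obstacle is the $2$-sphere case with $p=c$. This is where the full strength of the unit-distance bound on the sphere is needed, so the real care goes into matching the exact form of the cited result: distances are measured in the ambient (chordal) metric, and the radius of the sphere is arbitrary. Everything else is the elementary slicing computation above.
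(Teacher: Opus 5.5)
Your proof is correct and is essentially the paper's argument: partition the points by their distance to $v$, observe that a qualifying pair must lie in a single class at exactly that class's distance, and apply the spherical fixed-distance bound of~\cite{CEGSW90}, or the at-most-two-partners argument on a circle. The only difference is your extra observation that every class is contained in a circle unless $v$ projects to the center of $\Gamma$. This is a harmless refinement: the paper instead applies the $O(|B_j|^{4/3})$ bound to every class and sums, using $\sum_j |B_j|^{4/3}\le n^{4/3}$.
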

\begin{proof}[Proof of~\Cref{obs}]
We partition the $n$ points on $\Gamma$ into classes $B_1,\dots,B_m$, such that every point in $B_j$ is at some distance $b_j\geq0$ from $v$. Note that any two points on $\Gamma$ that, together with $v$, form an equilateral triangle must lie in the same class $B_j$, with the pairwise distance equal to $b_j$. A classical result of Clarkson, Edelsbrunner, Guibas, Sharir, and Welzl~\cite{CEGSW90} states that the number of occurrences of any fixed distance among $n$ points on a $2$-dimensional sphere is at most $O(n^{4/3})$. Moreover, observe that any fixed distance can occur at most $n$ times among $n$ points on a circle. Therefore, the number of such pairs of points is at most $\sum_{j\in[m]}O(|B_j|^{4/3})\leq O(n^{4/3})$ if $\Gamma$ is a $2$-dimensional sphere, and at most $\sum_{j\in[m]}|B_j|\leq n$ if $\Gamma$ is a circle.
\end{proof}

\subsection{Excluding the wrong structure}
Let $r\geq k\geq3$ and suppose $X\subseteq\RR^{2r+1}$ is a set of $n$ points which spans the maximum number of regular $(k-1)$-simplices. In order to establish~\Cref{simplex_odd}, we shall first apply~\Cref{simplex_stability} to obtain a coarse structure of $X$, and then further examine this structure using the extremality of $X$. However, in the case when $r=3$ (and hence $k=3$),~\Cref{simplex_stability} provides two possible structures for $X$. Our first objective is to rule out one of them.

\begin{lemma}
\label{dimension7}
If $X\subseteq\RR^7$ is a set of $n$ points that spans the maximum number of equilateral triangles, where $n$ is sufficiently large, then $X$ must have the structure described by items (1) and (2) in~\Cref{simplex_stability}.
\end{lemma}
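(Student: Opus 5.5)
Suppose towards a contradiction that $X\subseteq\RR^7$ is extremal but has the structure (1') and (2'). The plan is to show that $X$ then spans at most $(n/3)^3+O(n^{2})$ equilateral triangles. This contradicts the lower bound $S^3_7(n)\geq (n/3)^3+\Omega(n^{7/3})$ from \Cref{simplex}, so it is enough to beat the $n^{7/3}$ term.

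The main device is a \emph{spin} argument. In configuration (1'), (2') we have three circles $C_1,C_2,C_3$ with distinct centers, and each $A_i$ lies on a circle. A circle admits the $1$-sparse bound of \Cref{obs}: for any point $v$, at most $|A_i|$ pairs in $A_i$ form an equilateral triangle with $v$.

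We count the triangles of $X$ by type, using the partition $X=A_0\dot\cup A_1\dot\cup A_2\dot\cup A_3$ with $|A_0|<\eps n$.
\begin{enumerate}[(a)]
\item Transversal triangles with one vertex in each of $A_1,A_2,A_3$ number at most $|A_1||A_2||A_3|$.
\item Triangles with two vertices in the same $A_i$ and the third vertex $v$ anywhere number at most $n\cdot\sum_i|A_i|=O(n^2)$, by the circle case of \Cref{obs}.
\item Triangles using at least one vertex $v\in A_0$ with the other two in distinct parts need a finer bound, stated below.
\end{enumerate}

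The finer bound for (c) is a per-vertex degree bound: every $v\in A_0$ lies in at most $(1-c)\,n^2/9$ triangles for some absolute $c>0$, apart from lower-order terms. The reasoning is as follows. Fix $v$. For $v$ to be adjacent to a positive proportion of the pairs in $A_i\times A_j$, the points of $A_i$ equidistant from $v$ must be numerous. A point is equidistant from many points of a circle only if its projection onto the plane of that circle is the circle's center. Iterating this over $i=1,2,3$ forces $v$ to project onto every center $c_i$. This gives the coordinate description: $v$ matches the first six coordinates of the common projection, and only its last coordinate $t$ is free. The triangle conditions then become $\gamma_i^2+(t-a_i)^2=$ the common inter-circle distance squared for all $i$. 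A careful quantitative version of this uses the stability statement with small $\eps$ and the facts that $|A_i\cap\{\text{points at a fixed distance from }v\}|\le 2$ unless $v$ projects to $c_i$.

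The heart of the proof is then to show that the three equations $\gamma_i^2+(t-a_i)^2=D^2$ cannot all hold for a single $t$. Using the relations from (2') one checks the following:
\begin{itemize}
\item Subtracting the equations gives that $t$ must be equidistant in the last coordinate pattern.
\item The earlier identities $D^2=\gamma_i^2+\gamma_j^2+(a_i-a_j)^2$ combined with these force the $a_i$ to coincide, which contradicts the distinct centers in (1').
\end{itemize}
Consequently $v$ has degree $o(n^2)$ into transversal pairs, while a vertex of the extremal set should have degree at least about $n^2/9$. That lower degree bound comes from standard vertex-deletion and moving arguments, since moving a low-degree vertex onto $C_1$ gains triangles. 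This shows $A_0=\emptyset$ essentially.

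With $A_0$ empty, the total count is $|A_1||A_2||A_3|+O(n^2)\le (n/3)^3+O(n^2)$, which gives the contradiction. I expect the main obstacle to be this center-projection rigidity step: ruling out outside points $v$ that see many transversal pairs, and making it quantitative. I would first try the explicit coordinate computation in the basis from the proof of \Cref{bad_centers}.
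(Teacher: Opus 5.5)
Your overall plan matches the paper's: show $A_0=\emptyset$ by a replacement argument, then bound the triangle count by $n^3/27+O(n^2)$ using the circle case of \Cref{obs}. However, the central rigidity step is wrong as stated.

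For $v\in A_0$, the number of transversal pairs forming a triangle with $v$ is $m_1m_2+m_1m_3+m_2m_3$, where $m_i$ is the number of points of $X\cap C_i$ at the inter-circle distance $b$ from $v$. This is of order $n^2$ as soon as \emph{two} of the $m_i$ are large; nothing forces the third. So ``iterating over $i=1,2,3$'' does not make $v$ project onto all three centers. The case you then treat (only the last coordinate free, with three equations $\gamma_i^2+(t-a_i)^2=D^2$) is correctly shown impossible, but it is not the case that matters. The dangerous case is $m_1,m_2\geq 3$ with $m_3\leq 2$. There $v$ has degree about $n^2/9$, which is exactly what a new point on $C_1$ would gain. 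So the replacement argument gives nothing unless this case is excluded.

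This case also cannot be excluded by showing the distance equations are inconsistent, because they are not. If $v$ projects onto $c_1$ and $c_2$, then $v=(x_1,\dots,x_4,x_5',x_6',t)$ has three free coordinates, and every point of $C_3$ satisfies both conditions. The contradiction must come from $v\notin C_3$. The missing argument is the one in the paper:
\begin{itemize}
\item From $\|v-c_1\|^2+\gamma_1^2=\|v-c_2\|^2+\gamma_2^2$ and $(a_3-a_1)^2+\gamma_1^2=(a_3-a_2)^2+\gamma_2^2$ one gets a linear equation in $t$. Since $a_1\neq a_2$ it has a unique solution, namely $t=a_3$.
\item Then $\|v-c_1\|^2+\gamma_1^2=b^2=\gamma_3^2+\|c_3-c_1\|^2+\gamma_1^2$ forces $(x_5'-x_5)^2+(x_6'-x_6)^2=\gamma_3^2$. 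This means $v\in C_3$, contradicting $v\in A_0$.
\end{itemize}
Hence at most one $m_i$ is at least $3$. So $v$ lies in at most $\eps n^2+O(n)$ triangles, not $o(n^2)$, because the $\eps n^2$ term comes from triangles using another point of $A_0$. This is below the more than $n^2/16$ triangles gained by moving $v$ onto $C_1$, which forces $A_0=\emptyset$ exactly. With that step supplied, the rest of your proposal goes through.
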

\begin{proof}[Proof of~\Cref{dimension7}]
Suppose $X$ has the structure described by items (1') and (2') in~\Cref{simplex_stability}, with some sufficiently small $\eps>0$. Namely, there are pairwise orthogonal circles $C_1,C_2,C_3\subseteq\RR^7$ with pairwise distinct centers, such that the distance between any two points on different circles is a constant $b>0$, and $|X\cap C_i|>n/3-\eps n$ for every $i\in[3]$. Let $c_i$ denote the center of $C_i$. Since $C_1,C_2,C_3$ are pairwise orthogonal and the projection of $C_i$ onto $\mathrm{Aff}(C_j)$ is $c_j$ for any distinct $i,j\in[3]$, by equipping $\RR^7$ with an orthonormal basis consisting of the orthonormal bases of the linear subspaces associated with $\mathrm{Aff}(C_1),\mathrm{Aff}(C_2),\mathrm{Aff}(C_3)$ and one additional basis vector, we can assume that
\[\mathrm{Aff}(C_i)=c_i+\underbrace{\{0\}\times\dots\times\{0\}}_{2(i-1)\text{ times}}\times\RR^2\times\underbrace{\{0\}\times\dots\times\{0\}}_{7-2i\text{ times}}\]
and $c_i=(x_1,\dots,x_6,a_i)\in\RR^7$ for each $i\in[3]$, where $x_1,\dots,x_6$ are fixed and $a_1,a_2,a_3$ are pairwise distinct.

Suppose $A_0=X\setminus(C_1\cup C_2\cup C_3)$ is non-empty and let $v\in A_0$. Let $N_1,N_2,N_3$ be the numbers of equilateral triangles containing
\begin{itemize}
    \item $v$ and at least another point from $A_0$;
    \item $v$ and two points on $C_i$ for some $i\in[3]$;
    \item $v$ and two points from different circles,
\end{itemize}
respectively. Since $|A_0|<\eps n$, $N_1<\eps n^2$. By~\Cref{obs} we have that $N_2\leq O(n)$. Furthermore, since the inter-circle distance is equal to $b$, it holds that $N_3=m_1m_2+m_1m_3+m_2m_3$, where $m_i$ denotes the number of points in $X\cap C_i$ that are at distance $b$ from $v$. We claim that there is at most one $i\in[3]$ with $m_i\geq3$, which in turn implies that $N_3\leq O(n)$. Indeed, suppose without loss of generality that $m_1,m_2\geq3$. Then the projections of $v$ onto $\mathrm{Aff}(C_1)$ and $\mathrm{Aff}(C_2)$ would be $c_1$ and $c_2$, namely we can write $v=(x_1,x_2,x_3,x_4,x_5',x_6',x_7')\in\RR^7$. Let $\gamma_1,\gamma_2,\gamma_3$ be the radii of $C_1,C_2,C_3$, respectively. Then
\[0=\left(||v-c_1||^2+\gamma_1^2\right)- \left(||v-c_2||^2+\gamma_2^2\right)=(x_7'-a_1)^2+\gamma_1^2-(x_7'-a_2)^2-\gamma_2^2\]
and
\[0=\left(\gamma_3^2+||c_3-c_1||^2+\gamma_1^2\right)- \left(\gamma_3^2+||c_3-c_2||^2+\gamma_2^2\right)=(a_3-a_1)^2+\gamma_1^2-(a_3-a_2)^2-\gamma_2^2,\]
yielding that $x_7'=a_3$. From $||v-c_1||^2+\gamma_1^2=b^2=\gamma_3^2+||c_3-c_1||^2+\gamma_1^2$ one can further deduce that $(x_5'-x_5)^2+(x_6'-x_6)^2=\gamma_3^2$, which implies $v\in C_3$, a contradiction to $v\in A_0$. This shows that $v$ is contained in $N_1+N_2+N_3<\eps n^2+O(n)$ equilateral triangles. Then, by replacing $v$ with any point on $C_1\setminus X$, and noting that such a point contributes at least $|X\cap C_2|\cdot|X\cap C_3|>n^2/16>N_1+N_2+N_3$ equilateral triangles, we obtain a new set of $n$ points spanning more equilateral triangles than $X$, contradicting the extremality of $X$. Therefore, we have $A_0=\emptyset$.

Since $X\subseteq C_1\cup C_2\cup C_3$, we can bound the number of equilateral triangles spanned by $X$ as follows. There are two types of equilateral triangles: those consisting of three points from pairwise distinct circles, and those containing at least two points from the same circle. The number of the first type triangles is at most $\prod_{i\in[3]}|X\cap C_i|\leq n^3/27$. Regarding the second type triangles, note that for any point $v\in X$, by~\Cref{obs} there are at most $|X\cap C_i|$ pairs of points from $C_i$ that can form an equilateral triangle with $v$. Therefore, the number of second type triangles is at most $n\cdot\sum_{i\in[3]}|X\cap C_i|=n^2$. This shows that $X$ spans at most $n^3/27+n^2$ equilateral triangles, which is strictly below the lower bound on $S^3_7(n)$ from~\Cref{simplex}, a contradiction.
\end{proof}

\subsection{Proof of~\Cref{simplex_odd}}
Let $r\geq k\geq3$ be fixed integers and let $\eps>0$ be sufficiently small. Let $X\subseteq\RR^{2r+1}$ be a set of $n$ points that spans $S^k_{2r+1}(n)$ regular $(k-1)$-simplices, where $n$ is sufficiently large with respect to $\eps$. By~\Cref{simplex_stability} and~\Cref{dimension7} we have a partition $X=A_0\dot\cup A_1\dot\cup\dots\dot\cup A_r$ with $|A_0|<\eps n$ and $|A_i|>n/r-\eps n$ for each $i\in[r]$, and pairwise orthogonal spheres $C_1,\dots,C_r\subseteq\RR^{2r+1}$ with $A_i\subseteq C_i$ for all $i\in[r]$, such that
\begin{enumerate}
    \item $\dim{C_1}=\dots=\dim{C_{r-1}}=1$ and $\dim{C_r}=2$;
    \item $C_1,\dots,C_r$ have the same center and the same radius.
\end{enumerate}
In particular, we can assume that $A_{0}=X\setminus(\bigcup_{i\in[r]}C_i)$.

Since $\mathrm{Aff}(C_1),\dots,\mathrm{Aff}(C_r)$ are pairwise orthogonal and intersect at the common center of $C_1,\dots,C_r$, we can further assume without loss of generality that the common center is the origin and \[\mathrm{Aff}(C_i)=\underbrace{\{0\}\times\dots\times\{0\}}_{2(i-1)\text{ times}}\times\RR^2\times\underbrace{\{0\}\times\dots\times\{0\}}_{2r+1-2i\text{ times}}\] for all $i\in[r-1]$ and $\mathrm{Aff}(C_r)=\{0\}\times\dots\times\{0\}\times\RR^3$. Let $\gamma>0$ denote the radius of $C_1,\dots,C_r$. For any $v\in\RR^{2r+1}$, denote $\mathcal{I}(v)\subseteq[r]$ the set of indices $i\in[r]$, such that the projection of $v$ onto $\mathrm{Aff}(C_i)$ is the center of $C_i$, i.e., the origin.

\begin{claim}
\label{garbage_points}
If $v\in A_0$, then
\begin{enumerate}[(i)]
    \item $||v||=\gamma$;
    \item $\mathcal{I}(v)\subseteq[r-1]$ and $|\mathcal{I}(v)|=r-2$;
    \item $v$ is at distance $\sqrt{2\gamma^2}$ to at least $2/3$ of the points in $X\cap C_r$.
\end{enumerate}
\end{claim}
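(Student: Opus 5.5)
Throughout I use extremality of $X$: each $v\in A_0$ must lie in at least as many regular $(k-1)$-simplices as a new point placed on $C_1$. A point $u\in C_1\setminus X$ forms a regular simplex with every $(k-1)$-tuple taken from $k-1$ distinct sets among $A_2,\dots,A_r$ (\Cref{spheres_distance_center}(i)). Moving $v$ to such a $u$ therefore cannot increase the count, so
\[
\deg(v)\ \ge\ \binom{r-1}{k-1}\Bigl(\frac nr\Bigr)^{k-1}-O(\eps n^{k-1}).
\]

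Next I bound $\deg(v)$ from above, as in the proof of \Cref{dimension7}. Simplices containing $v$ and another point of $A_0$ contribute at most $\eps n^{k-1}$. By \Cref{obs}, simplices containing $v$ and two points of the same $C_i$ contribute $O(n^{k-2/3})$. The rest use points from distinct circles/spheres, at a common distance $\rho$ from $v$, and all pairwise distances must equal $\rho$. Since inter-sphere distances equal $\sqrt2\gamma$, we need $\rho=\sqrt2\gamma$. Let $m_i$ be the number of points of $A_i$ at distance $\sqrt2\gamma$ from $v$. Up to lower-order terms, $\deg(v)\le e_{k-1}(m_1,\dots,m_r)$.

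Now I use the geometry. If $m_i\ge 3$ for some $i\le r-1$, then $v$ is equidistant from three non-collinear points of $C_i$, so its projection onto $\mathrm{Aff}(C_i)$ is the origin and $i\in\mathcal{I}(v)$. For $i=r$, the points of $A_r$ at distance $\sqrt2\gamma$ from $v$ lie on a circle of $C_r$ (an intersection of a sphere with $C_r$). Unless the projection of $v$ to $\mathrm{Aff}(C_r)$ is the origin, this circle is a planar section and can hold only a small fraction of $A_r$.

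Write $v=(v_1,\dots,v_r)$ in block coordinates. If $i\in\mathcal I(v)$ then $v_i=0$, and $\|v-x\|^2=\|v\|^2+\gamma^2$ for $x\in C_i$. So $m_i>0$ forces $\|v\|=\gamma$. If $\mathcal I(v)$ contained all of $[r]$ (or all of $[r-1]$ together with $r$), then $v=0$, contradicting $\|v\|=\gamma$. If $\mathcal{I}(v)=[r-1]$, then $v\in\mathrm{Aff}(C_r)$ with $\|v\|=\gamma$, so $v\in C_r$, contradicting $v\in A_0$.

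So the number of indices with $m_i$ of order $n$ is at most $r-1$, and equals $r-1$ only if $r\in\mathcal{I}(v)$ and $|\mathcal{I}(v)\cap[r-1]|=r-2$. In that case $v_r=0$ and $v$ has exactly one nonzero block $v_j$, $j\le r-1$, with $\|v_j\|=\gamma$, i.e. $v\in C_j$. This contradicts $v\in A_0$.

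Suppose instead that at most $r-2$ indices have $m_i=\Omega(n)$, or that $r-1$ indices are large but $m_r$ is small. Then $e_{k-1}(m_1,\dots,m_r)$ is at most roughly
\[
\binom{r-2}{k-1}\Bigl(\frac nr\Bigr)^{k-1}+\binom{r-2}{k-2}\Bigl(\frac nr\Bigr)^{k-2}m_r .
\]
Comparing with the lower bound $\binom{r-1}{k-1}(n/r)^{k-1}$ forces $m_r\ge(1-O(\eps))n/r$, and the rest follows from tracking constants. This gives $\|v\|=\gamma$, which is (i). It gives $|\mathcal I(v)\cap[r-1]|=r-2$, and also that $m_r$ is large. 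Since $m_r$ is large and $r\notin\mathcal I(v)$, we get (ii), and (iii) follows with the $2/3$ threshold.

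The main obstacle is the $C_r$ sphere step. I must show that if $v_r\neq0$, the points of $A_r$ at distance $\sqrt2\gamma$ lie on a single circle of $C_r$. Then (iii) needs that circle to contain at least $2/3$ of $A_r$. This does not contradict anything, since nothing prevents $A_r$ from concentrating on one circle. So here I only extract the counting inequality and do not derive a contradiction. The delicate part is quantifying that $m_r\ge \tfrac23|X\cap C_r|$ is forced by the degree comparison. I would compare $\binom{r-2}{k-1}+\binom{r-2}{k-2}\theta\ge\binom{r-1}{k-1}-O(\eps)$, where $\theta=m_r/(n/r)$, which gives $\theta\ge 1-O(\eps)>2/3$.
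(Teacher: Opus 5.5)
Your approach is the paper's. You compare $\deg(v)$ with the number of simplices a fresh point on one of the spheres would create; the paper uses $C_r$, you use $C_1$, and either works. You then bound $\deg(v)$ by the contributions from $A_0$, from pairs on a common sphere, and from transversal tuples. Next you use that $m_i\le 2$ for $i\in[r-1]\setminus\mathcal{I}(v)$, while for $i\in\mathcal{I}(v)$ one has $m_i=|A_i|$ or $0$ according to whether $\|v\|=\gamma$. Finally you exclude $|\mathcal{I}(v)|\ge r-1$ and conclude with $\binom{r-1}{k-1}=\binom{r-2}{k-1}+\binom{r-2}{k-2}$, exactly as in the paper's Cases 2 and 3. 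However, two intermediate statements are wrong as written and need repair.

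\textbf{The exponent.} By \Cref{obs}, simplices through $v$ with two vertices on the same $C_i$ number $O(n^{4/3})\cdot n^{k-3}=O(n^{k-5/3})$. The bound $O(n^{k-2/3})$ you state exceeds $n^{k-1}$. It is therefore not a lower-order term, and $\deg(v)\le e_{k-1}(m_1,\dots,m_r)+o(n^{k-1})$ does not follow from it.

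\textbf{The circle on $C_r$.} A circle of $C_r$ need not hold only a small fraction of $A_r$, as you concede later. So your assertion that $r-1$ large indices force $r\in\mathcal{I}(v)$ is false. The correct bookkeeping runs as follows.
\begin{enumerate}[(a)]
    \item If $\|v\|\neq\gamma$, only $m_r$ can be large. Since $r-1\ge 2$, this already gives the deficit (the paper's Case 1), so (i) holds.
    \item Your three exclusions then give $|\mathcal{I}(v)|\le r-2$, and the large indices lie in $(\mathcal{I}(v)\cap[r-1])\cup\{r\}$.
    \item Hence $r-1$ large indices occur only when $r\notin\mathcal{I}(v)$, $|\mathcal{I}(v)|=r-2$ and $m_r$ is large. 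This is precisely (ii) and (iii), not a case to be ruled out.
\end{enumerate}

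\textbf{The ``main obstacle.''} What you call the main obstacle is not needed here. The claim requires only the counting inequality, which you already have. The fact that the points of $C_r$ at distance $\sqrt{2}\gamma$ from $v$ lie on one circle is used in the paper only after the claim, to construct $\Gamma_1,\dots,\Gamma_r$.
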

\begin{proof}[Proof of~\Cref{garbage_points}]
We will show that if one of the claimed properties of $v$ does not hold, then $X$ is not an extremal set. Let $N_1,N_2,N_3$ denote the numbers of regular $(k-1)$-simplices that contain
\begin{itemize}
    \item $v$ and at least one other point from $A_0$;
    \item $v$ and at least two points on $C_i$ for some $i\in[r]$;
    \item $v$ and $k-1$ points from pairwise distinct spheres,
\end{itemize}
respectively. Then the number of regular $(k-1)$-simplices containing $v$ is at most $N_1+N_2+N_3$.

Since $|A_0|<\eps n$, we have that $N_1<\eps n^{k-1}$. For each $i\in[r]$, by~\Cref{obs} the number of pairs of points in $A_i$ that can form a regular $(k-1)$-simplex with $v$ is at most $O(n^{4/3})$, accordingly, $N_2\leq O(n^{k-5/3})$. If
\begin{equation}
\label{small}
N_3\leq\binom{r-1}{k-1}\left(\frac{n}{r}\right)^{k-1}-r^{-3k}n^{k-1},
\end{equation}
then any point $v'\in C_r\setminus X$ forms at least
\[\sum_{\mathcal{J}\in\binom{[r-1]}{k-1}}\prod_{j\in\mathcal{J}}|X\cap C_j|>\binom{r-1}{k-1}\left(\frac{n}{r}\right)^{k-1}-r^k\eps n^{k-1}>N_1+N_2+N_3\]
regular $(k-1)$-simplices with points in $X\setminus\{v\}$. Hence, replacing $v$ with $v'$ results in a new set of $n$ points spanning more regular $(k-1)$-simplices, contradicting the extremality of $X$. It remains to show that if one of items (i), (ii), and (iii) does not hold, then we have~\eqref{small}.

\textbf{Case 1: item (i) does not hold.}
Then the distance between $v$ and any point on $C_i$, $i\in\mathcal{I}(v)$, is not equal to $\sqrt{2\gamma^2}$. Since the distance between every two points from different spheres is $\sqrt{2\gamma^2}$, we have $N_3\leq\sum_{\mathcal{J}\in\binom{[r]\setminus\mathcal{I}(v)}{k-1}}\prod_{j\in\mathcal{J}}|X\cap C_j|$. Observe that for any $j\in[r-1]\setminus\mathcal{I}(v)$, there are at most $2$ points on $C_j$ at distance $\sqrt{2\gamma^2}$ from $v$, as otherwise the projection of $v$ onto $\mathrm{Aff}(C_j)$ would be the center of $C_j$, meaning that $j\in\mathcal{I}(v)$, a contradiction. This observation combined with the above upper bound on $N_3$ yields that $N_3\leq O(n)$.

\textbf{Case 2: item (ii) does not hold.}
Since $||v||=\gamma>0$, we have $|\mathcal{I}(v)|\leq r-1$, as otherwise $v$ would be the common center of $C_1,\dots,C_r$, i.e., the origin, a contradiction. Moreover, $|\mathcal{I}(v)|\neq r-1$. Indeed, if $\mathcal{I}(v)=[r]\setminus\{j\}$ for some $j\in[r]$, then $v\in\mathrm{Aff}(C_j)$ and $||v||=\gamma$, implying that $v\in C_j$, a contradiction. Now since $|\mathcal{I}(v)|\leq r-2$ and item (ii) does not hold, we must have either $r\in\mathcal{I}(v)$ or $|\mathcal{I}(v)|\leq r-3$. If $r\in\mathcal{I}(v)$, as there are at most $2$ points on $C_j$ at distance $\sqrt{2\gamma^2}$ from $v$ for each $j\in[r]\setminus\mathcal{I}(v)$, we have
\begin{align*}
N_3\leq\left(\sum_{\mathcal{J}\in\binom{\mathcal{I}(v)}{k-1}}\prod_{j\in\mathcal{J}}|X\cap C_j|\right)+O(n^{k-2})&\leq\binom{r-2}{k-1}\left(\frac{n}{r}+r\eps n\right)^{k-1}\\
&\leq\frac{(1+r^2\eps)^{k-1}}{r-1}\binom{r-1}{k-1}\left(\frac{n}{r}\right)^{k-1}\\
&\leq\binom{r-1}{k-1}\left(\frac{n}{r}\right)^{k-1}-r^{-3k}n^{k-1},
\end{align*}
yielding~\eqref{small}. Note that since the binomial coefficient here counts the number of $(k-1)$-element subsets, we default that $\binom{\alpha}{\beta}=0$ if $\alpha<\beta$. If $|\mathcal{I}(v)|\leq r-3$, then
\begin{align*}
N_3&\leq\left(\sum_{\mathcal{J}\in\binom{\mathcal{I}(v)}{k-1}}\prod_{j\in\mathcal{J}}|X\cap C_j|\right)+\left(\sum_{\mathcal{J}\in\binom{\mathcal{I}(v)}{k-2}}|X\cap C_r|\cdot\prod_{j\in\mathcal{J}}|X\cap C_j|\right)+O(n^{k-2})\\
&\leq\left(\binom{r-3}{k-1}+\binom{r-3}{k-2}\right)\left(\frac{n}{r}+r\eps n\right)^{k-1}\\
&=\binom{r-2}{k-1}\left(\frac{n}{r}+r\eps n\right)^{k-1}\leq\binom{r-1}{k-1}\left(\frac{n}{r}\right)^{k-1}-r^{-3k}n^{k-1},
\end{align*}
where in the last inequality we used the same estimate as in the above case.

\textbf{Case 3: item (iii) does not hold.}
In particular, $v$ has distance $\sqrt{2\gamma^2}$ to less than $2/3$ of the points in $X\cap C_r$. Then
\begin{align*}
N_3&\leq\left(\sum_{\mathcal{J}\in\binom{\mathcal{I}(v)}{k-1}}\prod_{j\in\mathcal{J}}|X\cap C_j|\right)+\left(\sum_{\mathcal{J}\in\binom{\mathcal{I}(v)}{k-2}}\frac{2|X\cap C_r|}{3}\cdot\prod_{j\in\mathcal{J}}|X\cap C_j|\right)+O(n^{k-2})\\
&\leq\left(\binom{r-2}{k-1}+\left(1-\frac{1}{3}\right)\binom{r-2}{k-2}\right)\left(\frac{n}{r}+r\eps n\right)^{k-1}\\
&=\left(\binom{r-1}{k-1}-\frac{1}{3}\binom{r-2}{k-2}\right)(1+r^2\eps)^{k-1}\left(\frac{n}{r}\right)^{k-1}\\
&\leq\binom{r-1}{k-1}\left(\frac{n}{r}\right)^{k-1}-\frac{1}{10}\binom{r-2}{k-2}\left(\frac{n}{r}\right)^{k-1}\leq\binom{r-1}{k-1}\left(\frac{n}{r}\right)^{k-1}-r^{-3k}n^{k-1},
\end{align*}
as desired. This completes the proof of~\Cref{garbage_points}.
\end{proof}

Next we shall define a new collection of spheres $\Gamma_1,\dots,\Gamma_r\subseteq\RR^{2r+1}$, each of dimension at most $2$, such that $X\subseteq\bigcup_{i\in[r]}\Gamma_i$.

If $A_0$ is empty, then we simply let $\Gamma_i=C_i$ for all $i\in[r]$. 

If $A_0\neq\emptyset$, then for any $v\in A_0$, by~\Cref{garbage_points} item (ii) we have $r\notin\mathcal{I}(v)$, i.e., the projection of $v$ onto $\mathrm{Aff}(C_r)$ is not the center of $C_r$. Meanwhile, by~\Cref{garbage_points} item (iii) $v$ is equidistant to at least $2/3$ of the points in $X\cap C_r$, implying that at least $2/3$ of the points in $X\cap C_r$ lie on a circle in the sphere $C_r$. Since there is at most one such circle in $C_r$, we conclude that the projections of the points in $A_0$ onto $\mathrm{Aff}(C_r)$ all lie on the line $L$ that goes through the center of this circle and is perpendicular to this circle. In particular, $L$ also goes through the center of $C_r$, which is the origin. Note that $L\subseteq\{0\}\times\dots\times\{0\}\times\RR^3$ is a $1$-dimensional linear subspace that is orthogonal to $\mathrm{Aff}(C_i)$ for every $i\in[r-1]$. Now let $\Gamma_r=C_r$, and for each $i\in[r-1]$, let $\Gamma_i$ be the $2$-dimensional sphere of radius $\gamma$ centered at the origin, lying in the $3$-dimensional subspace spanned by $\mathrm{Aff}(C_i)\cup L$. Since $C_i\subseteq\Gamma_i$ for all $i\in[r]$, we have $X\setminus A_0\subseteq\bigcup_{i\in[r]}\Gamma_i$. It suffices to check that $A_0\subseteq\bigcup_{i\in[r]}\Gamma_i$. For any $v\in A_0$, by~\Cref{garbage_points} item (ii) we have $\mathcal{I}(v)=[r-1]\setminus\{i\}$ for some $i\in[r-1]$, namely, $v$ lies in the subspace spanned by $\mathrm{Aff}(C_i)\cup\mathrm{Aff}(C_r)$. Since the projection of $v$ onto $\mathrm{Aff}(C_r)$ is contained in $L$, $v$ actually lies in the subspace spanned by $\mathrm{Aff}(C_i)\cup L$. Then by~\Cref{garbage_points} item (i) we have $v\in\Gamma_i$.

We can now bound the number of $(k-1)$-simplices spanned by $X$.

\begin{claim}
\label{odd_case}
$S^k_{2r+1}(n)\leq\binom{r}{k}\left(\frac{n}{r}\right)^k+O(n^{k-2/3})$.
\end{claim}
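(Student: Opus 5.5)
We bound the simplices spanned by the extremal set $X$ using the covering $X\subseteq\bigcup_{i\in[r]}\Gamma_i$ built above. Each $\Gamma_i$ is a sphere of dimension at most $2$, centered at the origin, with radius $\gamma$. To make the count a partition, assign each point of $X$ to one index $i$ with $v\in\Gamma_i$. Points of $A_0$ are assigned to their $\Gamma_i$ with $\mathcal{I}(v)=[r-1]\setminus\{i\}$, and points of $A_i$ to $\Gamma_i$. This gives a partition $X=B_1\dot\cup\dots\dot\cup B_r$ with $B_i\subseteq\Gamma_i$. Every regular $(k-1)$-simplex spanned by $X$ is then of one of two types.

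\textbf{Type 1} simplices have all $k$ vertices in pairwise distinct parts $B_i$. Their number is at most the number of edges of the complete $r$-partite $k$-graph with parts $B_1,\dots,B_r$. By Maclaurin's inequality this is at most $\binom{r}{k}(n/r)^k$. No geometry is needed here.

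\textbf{Type 2} simplices contain two vertices $u,w$ lying in the same part $B_i$. We fix the remaining $k-2$ vertices (at most $n^{k-3}$ choices of $k-3$ of them), together with one more vertex $v$. Since the simplex is regular, $\{u,w\}$ is a pair on the sphere $\Gamma_i$ forming an equilateral triangle with $v$. By~\Cref{obs}, for each fixed $v$ there are at most $O(|B_i|^{4/3})=O(n^{4/3})$ such pairs. Summing over $i\in[r]$, over $v\in X$, and over the other $k-3$ vertices gives at most $O(n\cdot n^{4/3}\cdot n^{k-3})=O(n^{k-2/3})$ Type 2 simplices. Here $r$ and $k$ are fixed, so overcounting only costs constant factors.

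Adding the two bounds gives $S^k_{2r+1}(n)\le\binom{r}{k}(n/r)^k+O(n^{k-2/3})$, which is the claim. The main obstacle is not the counting itself but having the global covering by spheres of dimension at most $2$, since~\Cref{obs} needs every point involved to lie on a sphere. This is exactly why all of $X$, including $A_0$, had to be placed on the $\Gamma_i$ via~\Cref{garbage_points}.

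The one point to double-check is that the counting does not require the parts to have particular sizes. Maclaurin's inequality bounds the edges of any $r$-partite $k$-graph on $n$ vertices, so Type 1 is fine regardless of the $|B_i|$. Type 2 only uses $|B_i|\le n$.

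Together with the lower bound from~\Cref{simplex}, this yields~\Cref{simplex_odd}.
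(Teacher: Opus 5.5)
Your argument is correct and follows the paper's framework. You use the same partition of $X$ along the spheres $\Gamma_i$, the same Maclaurin bound for simplices with at most one vertex per part, and the same use of \Cref{obs}: fix $k-2$ vertices and count pairs inside one part.

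The one real difference is the decomposition. The paper splits off a third class, simplices with at least three vertices in some $V_i$, and bounds it by $O(n^{k-6/7})$ using the Sharir--Sheffer--Zahl bound of $O(n^{15/7})$ equilateral triangles in $\RR^3$. You absorb this class into your Type 2 count. That is legitimate because \Cref{obs} holds for an arbitrary point $v\in\RR^d$, including a point on $\Gamma_i$ itself. So nothing in the pair count requires the fixed vertices to avoid $B_i$. In fact, the paper's own $\Delta_2$ count ranges over arbitrary $(k-2)$-sets $T$, so it already covers $\Delta_3$.

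Your version therefore removes the need for the external Sharir--Sheffer--Zahl result without affecting the $O(n^{k-2/3})$ error term. The paper's separate treatment only adds the sharper information that simplices with three vertices in one part contribute $O(n^{k-6/7})$, which is not needed for this claim.
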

\begin{proof}[Proof of~\Cref{odd_case}]
Since $X\subseteq\bigcup_{i\in[r]}\Gamma_i$, there is a partition $X=V_1\dot\cup\dots\dot\cup V_r$ with $V_i\subseteq \Gamma_i$ for each $i\in[r]$. There are three types of simplices spanned by $X$:
\begin{itemize}
    \item regular $(k-1)$-simplices $\Delta_1$ with $|\Delta_1\cap V_i|\leq1$ for all $i\in[r]$;
    \item regular $(k-1)$-simplices $\Delta_2$ with $|\Delta_2\cap V_i|=2$ for some $i\in[r]$ and $|\Delta_2\cap V_i|\leq2$ for others;
    \item regular $(k-1)$-simplices $\Delta_3$ containing at least three vertices from some $V_i$.
\end{itemize}
The number of $\Delta_1$ is at most \[\sum_{\mathcal{I}\in\binom{[r]}{k}}\prod_{i\in\mathcal{I}}|V_i|\leq \binom{r}{k}\left(\frac{n}{r}\right)^k.\]
The number of $\Delta_2$ is at most $O(n^{k-2/3})$, since for each $i\in[r]$ and any $(k-2)$-element subset $T\subseteq X$, by~\Cref{obs} the number of pairs of points in $V_i$ that can form a regular $(k-1)$-simplex with $T$ is at most $O(n^{4/3})$.
Sharir, Sheffer, and Zahl~\cite{SSZ13} proved that any $n$ points in $\RR^3$ determine at most $O(n^{15/7})$ equilateral triangles. This implies that each $V_i$ contains at most $O(n^{15/7})$ triples that can contribute to a regular $(k-1)$-simplex, namely, the number of $\Delta_3$ is at most $O(n^{k-6/7})$. So the number of regular $(k-1)$-simplices spanned by $X$ is at most $\binom{r}{k}\left(\frac{n}{r}\right)^k+O(n^{k-2/3})$ as desired.
\end{proof}

Combining~\Cref{odd_case} with the lower bound given in~\Cref{simplex}, we have \[S^k_{2r+1}(n)=\binom{r}{k}\left(\frac{n}{r}\right)^k+\Theta(n^{k-2/3}),\] completing the proof of~\Cref{simplex_odd}.\qed

\section*{Acknowledgements}
F.~C.~Clemen is partially supported by a PIMS Postdoctoral Fellowship (PIMS-20260907-PDF). The authors thank Adrian Dumitrescu for valuable comments and an anonymous reader for pointing out a second possible configuration in $\RR^7$ in an earlier draft.

\end{document}